\documentclass[reqno]{amsart}

\usepackage{mystyle}

\usepackage{etoolbox}

\usepackage[color=cyan!50]{todonotes} 
\presetkeys{todonotes}{inline}{} 
\usepackage{xcolor}

\usepackage[headings]{fullpage}
\usepackage{parskip}

\usepackage[utf8]{inputenc}
\usepackage[T1]{fontenc}
\usepackage[british]{babel}
\usepackage[pdfencoding=auto]{hyperref}

\usepackage{microtype}
\usepackage{booktabs}

\usepackage{amsfonts,amssymb,bbm,mathrsfs,stmaryrd}
\usepackage{amsmath}

\usepackage{mathtools}
\usepackage[noabbrev]{cleveref}
\usepackage{centernot}
\usepackage[shortlabels]{enumitem}
\usepackage{url}
\usepackage{tikz}
\usepackage{pict2e}
\usepackage{framed}

\usetikzlibrary{matrix}
\usetikzlibrary{positioning}
\usetikzlibrary{arrows}
\tikzset{> =stealth}
\usetikzlibrary{arrows.meta}
\tikzset{normalHead/.tip={Triangle[open,angle=60:4pt]},}
\tikzset{normalTail/.tip={Triangle[reversed,open,angle=60:4pt]},}



\theoremstyle{plain}
\newtheorem{theorem}{Theorem}[section]

\newtheorem{proposition}[theorem]{Proposition}
\newtheorem{corollary}[theorem]{Corollary}

\theoremstyle{definition}
\newtheorem{definition}[theorem]{Definition}

\newtheorem{example}{Example}
\AtBeginEnvironment{example}{\vspace{11pt}\begin{leftbar}\vspace{-11pt}}
\AtEndEnvironment{example}{\end{leftbar}}

\AtBeginEnvironment{construction}{\begin{leftbar}}
\AtEndEnvironment{construction}{\end{leftbar}}

\theoremstyle{remark}

\renewcommand{\epsilon}{\varepsilon}
\renewcommand{\phi}{\varphi}

\newcommand{\inv}{^{-1}}

\newcommand{\WSExt}{\mathrm{WSExt}}

\newcommand{\SpltExt}{\mathrm{SplExt}}

\newcommand{\ophi}{\overline{\phi}}

\DeclareMathOperator{\Gl}{Gl}


\newcommand{\splitext}[6]{
\tikz[baseline]{
\newdimen{\mylabelwidth}
\newdimen{\skipwidth}
\node[anchor=base] (A) {\hspace*{\dimexpr0.5pt-\pgfkeysvalueof{/pgf/inner xsep}}${#1}$};
\settowidth{\mylabelwidth}{\pgfinterruptpicture {$#2$} \endpgfinterruptpicture}
\pgfmathsetlength{\skipwidth}{max(\mylabelwidth,10pt)}
\node[right] (B) at ([xshift=\skipwidth+12pt]A.east) {${#3}$};
\settowidth{\mylabelwidth}{\pgfinterruptpicture {$#4$} \endpgfinterruptpicture}
\settowidth{\skipwidth}{\pgfinterruptpicture {$#5$} \endpgfinterruptpicture}
\pgfmathsetlength{\skipwidth}{max(\skipwidth,\mylabelwidth,10pt)}
\node[right] (C) at ([xshift=\skipwidth+12pt]B.east) {${#6}$\hspace*{\dimexpr0.5pt-\pgfkeysvalueof{/pgf/inner xsep}}};
\draw[normalTail->] (A) to node [above] {${#2}$} (B);
\draw[transform canvas={yshift=0.5ex},-normalHead] (B) to node [above] {${#4}$} (C);
\draw[transform canvas={yshift=-0.5ex},->] (C) to node [below] {${#5}$} (B);
}}


\title{A characterization of weakly Schreier extensions of monoids}
\author[P. F. Faul]{Peter F. Faul}
\address{Department of Pure Mathematics and Statistical Sciences\\ University of Cambridge}
\email{peter@faul.io}

\date{\today}

\subjclass[2010]{20M50, 18G50.}
\keywords{semigroup, Artin gluing, protomodular, monoid extension.} 


\begin{document}

\maketitle

\begin{abstract}
A split extension of monoids with kernel $k \colon N \to G$, cokernel $e \colon G \to H$ and splitting $s \colon H \to G$ is Schreier if there exists a unique set-theoretic map $q \colon G \to N$ such that for all $g \in G$, $g = kq(g) \cdot se(g)$. Schreier extensions have a complete characterization and have been shown to correspond to monoid actions of $H$ on $N$. If the uniqueness requirement of $q$ is relaxed, the resulting split extension is called weakly Schreier. A natural example of these is the Artin glueings of frames. In this paper we provide a complete characterization of the weakly Schreier extensions of $H$ by $N$, proving them to be equivalent to certain quotients of $N \times H$ paired with a function that behaves like an action with respect to the quotient. Furthermore, we demonstrate the failure of the split short lemma in this setting and provide a full characterization of the morphisms that occur between weakly Schreier extensions. Finally, we use the characterization to construct some classes of examples of weakly Schreier extensions.
\end{abstract}

\section{Introduction}


It is well understood that for groups $H$ and $N$, the semidirect product construction provides an equivalence between actions of $H$ on $N$ and split extensions of $H$ by $N$. The same cannot be said when $H$ and $N$ are replaced with monoids; however, monoid actions do correspond naturally to a certain class of split extensions of monoids: the Schreier split extensions. These split extensions of monoids were first alluded to in \cite{patchkoria1998crossed} and were first studied explicitly in \cite{martins2013semidirect}, where their relationship to actions was established. We briefly sketch one direction of this relationship below.

A \emph{Schreier extension} $\splitext{N}{k}{G}{e}{s}{H}$ is a split extension in which for all $g \in G$, there exist unique $n \in N$ such that $g = k(n) \cdot se(g)$. When $n$ is not required to be unique, we call the resulting split extension \emph{weakly Schreier}.

Given a Schreier extension $\splitext{N}{k}{G}{e}{s}{H}$, we can associate to it a set-theoretic map $q$ which satisfies that for all $g \in G$, $g = kq(g) \cdot se(g)$. From $q$ we can construct an action $\alpha \colon H \times N \to N$ where $\alpha(h,n) = q(s(h)k(n))$, which can then be used to define a multiplication on the set $N \times H$ given by $(n,h)\cdot (n',h') = (n \cdot \alpha(h,n'),hh')$. The result is a monoid $(N \times H, \cdot, (1,1))$ isomorphic to $G$ via $\phi \colon (N \times H, \cdot, (1,1)) \to G$, where $(n,h)$ is sent to $k(n) \cdot s(h)$.

\subsection*{\texorpdfstring{$S$}{S}-protomodularity}

\emph{Pointed protomodular categories} (see \cite{bourn1998protomodularity} and \cite{borceux2004protomodular}) may be thought of as categories with will behaved split extensions. The study of Schreier split extensions motivated a more relaxed notion, that of pointed $S$-protomodularity, where only a restricted class $S$ of split extensions need be well behaved \cite{bourn2015Sprotomodular}. 

Just what properties this class $S$ must satisfy has not been firmly established. When inspiration is taken from Schreier split epimorphisms we require that, in addition to other properties, $S$ be closed under the taking of finite limits. However in \cite{bourn2015partialMaltsev}, a situation is considered in which this property is relaxed. The latter situation captures the case of weakly Schreier extensions whereas the former does not. For this reason one might wonder whether there is good motivation for the study of weakly Schreier extensions. In the following paragraphs we justify their study.

\subsection*{Artin glueings as weakly Schreier extensions}

The \emph{Artin glueing} of two frames $N$ and $H$ along a finite meet preserving map $f \colon H \to N$, is a well known construction which returns the frame of pairs $(n,h)$ satisfying that $n \leq f(h)$ with componentwise meet and join \cite{wraith1974glueing}.

In \cite{faul2019artin} it was shown that in the category of frames with finite meet preserving maps, Artin glueings are equivalent to a certain class of split epimorphisms. A split epimorphism $\splitext{N}{k}{G}{e}{s}{H}$ belongs to this class if and only if 
for each $g \in G$, there exists an $n \in N$ such that $g = k(n)se(g)$.

Note that the category we are considering is a full subcategory of the category of monoids where we consider frames to be monoids with operation given by the finite meet. Then Artin glueings of $H$ by $N$ precisely correspond to the weakly Schreier extensions of $H$ by $N$ which occur in this subcategory.

Along with Schreier extensions, Artin glueings provide a natural example of weakly Schreier extensions and motivate their study.

\subsection*{Outline}

In this paper we provide a complete classification of the weakly Schreier extensions of $H$ by $N$ proving them equivalent to certain quotients of $N \times H$, equipped with something that behaves like an action relative to the quotient. Further, we demonstrate the failure of the split short five lemma and provide a complete classification of the morphisms that occur between two weakly Schreier extensions. Finally we provide some techniques for constructing weakly Schreier extensions. First by generalising the Artin glueing construction and then by considering the coarsest quotient compatible with our construction. 

Hopefully, with the better understanding of weakly Schreier extensions that this paper provides, progress might be made towards resolving certain issues surrounding $S$-protomodularity.

\section{Weak semidirect products}\label{weaksemidirectproducts}

Inspired by the semidirect product construction for Schreier extensions, we consider a related construction in the weakly Schreier setting. 

\begin{definition}
A diagram $\splitext{N}{k}{G}{e}{s}{H}$ is a \emph{split extension} if 
\begin{enumerate}
    \item $k$ is the kernel of $e$,
    \item $e$ is the cokernel of $k$,
    \item $es = 1_H$.
\end{enumerate}
\end{definition}

\begin{definition}
The category $\SpltExt(H,N)$ has split extensions of $H$ by $N$ as objects and, as morphisms, monoid maps $f \colon G_1 \to G_2$ making the three squares in the following diagram commute.

\begin{center}
   \begin{tikzpicture}[node distance=1.5cm, auto]
    \node (A) {$N$};
    \node (B) [right of=A] {$G_1$};
    \node (C) [right of=B] {$H$};
    \node (D) [below of=A] {$N$};
    \node (E) [right of=D] {$G_2$};
    \node (F) [right of=E] {$H$};
    \draw[normalTail->] (A) to node {$k_1$} (B);
    \draw[transform canvas={yshift=0.5ex},-normalHead] (B) to node {$e_1$} (C);
    \draw[transform canvas={yshift=-0.5ex},->] (C) to node {$s_1$} (B);
    \draw[normalTail->] (D) to node {$k_2$} (E);
    \draw[transform canvas={yshift=0.5ex},-normalHead] (E) to node {$e_2$} (F);
    \draw[transform canvas={yshift=-0.5ex},->] (F) to node {$s_2$} (E);
    \draw[->] (B) to node {$f$} (E);
    \draw[double equal sign distance] (A) to (D);
    \draw[double equal sign distance] (C) to (F);
   \end{tikzpicture}
\end{center}

\end{definition}

\begin{definition}
A split extension $\splitext{N}{k}{G}{e}{s}{H}$ is called weakly Schreier when every element of $g \in G$ can be written as $g = k(n) \cdot se(g)$ for some $n \in N$.
\end{definition}

Recall that if $n$ is required to be unique, then the extension is called Schreier.

Similar to Schreier extensions, at least under the assumption of the axiom of choice, the definition can be reframed in terms of a set-theoretic map $q$. However, in the weakly Schreier setting, this map will not in general be unique. 

\begin{proposition}\label{prop:axiom}
Under the assumption of the axiom of choice, an extension $\splitext{N}{k}{G}{e}{s}{H}$ is weakly Schreier if and only if there exists a set theoretic map $q \colon G \to N$ satisfying that for all $g \in G$, $g = kq(g) \cdot se(g)$.
\end{proposition}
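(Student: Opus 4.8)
The plan is to prove the two implications separately, treating them asymmetrically, since only one direction actually invokes the axiom of choice.

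First I would dispatch the easy direction. Suppose a set-theoretic map $q \colon G \to N$ satisfying $g = kq(g) \cdot se(g)$ for all $g \in G$ is given. Then for each $g$ the element $n = q(g)$ witnesses that $g = k(n) \cdot se(g)$, so the extension is weakly Schreier directly by the definition. No choice principle is needed here; it is a pure unwinding of definitions.

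For the converse, assume the extension is weakly Schreier. For each $g \in G$ I would form the fiber $F_g = \{\, n \in N : g = k(n) \cdot se(g) \,\}$. The weakly Schreier hypothesis says exactly that every $F_g$ is nonempty. I would then apply the axiom of choice to the family $\{F_g\}_{g \in G}$ to obtain a choice function, and define $q(g)$ to be the selected element of $F_g$. By construction $g = kq(g) \cdot se(g)$ holds for every $g \in G$, yielding the desired map.

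The only real obstacle is the \emph{simultaneity} of the selection: the weakly Schreier condition produces a suitable $n$ for each individual $g$ in isolation, but assembling these pointwise witnesses into a single total function $q \colon G \to N$ requires choosing from possibly infinitely many nonempty sets at once, which is precisely what the axiom of choice provides. This is also why the statement is hedged with the choice assumption, and why without it one should instead work with the relational formulation rather than a function $q$. Everything beyond this choice is routine.
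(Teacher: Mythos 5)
Your proof is correct and is precisely the routine argument the paper has in mind: the paper states this proposition without any written proof, treating it as immediate, and your two directions (definitional unwinding one way, a choice function on the nonempty fibers $F_g = \{n \in N : g = k(n)\cdot se(g)\}$ the other way) are exactly what that omission presupposes. Your remark that only the converse direction uses choice, and that choice is needed solely to assemble the pointwise witnesses into a single function $q$, is also the right way to understand why the hypothesis appears in the statement.
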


Inspired by \cite{bourn2015Sprotomodular} we call such a map $q$, an \emph{associated Schreier retraction} of $\splitext{N}{k}{G}{e}{s}{H}$. We now prove some analogues of results that occur in \cite{bourn2015Sprotomodular}.

\begin{proposition}\label{prop:reviewersuggestion}
Let $\splitext{N}{k}{G}{e}{s}{H}$ be weakly Schreier and let $q$ be an associated Schreier retraction. Then the following properties hold:
\begin{enumerate}
    \item $qk = 1_N$,
    \item $q(1) = 1$,
    \item $kq(s(h)k(n)) \cdot s(h) = s(h)k(n).$
\end{enumerate}
\end{proposition}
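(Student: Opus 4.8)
The plan is to derive all three identities directly from the defining equation $g = kq(g)\cdot se(g)$, using only the basic structural facts supplied by the split extension: that $k$ is a monomorphism (being a kernel), that $ek$ is trivial (so $ek(n)=1_H$ for all $n$), and that $e$, $s$ and $k$ are monoid homomorphisms and hence preserve the unit.

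For (1), I would substitute $g = k(n)$ into the defining equation. Since $k$ is the kernel of $e$ we have $ek(n)=1_H$, so $se(k(n)) = s(1_H) = 1_G$, and the defining equation collapses to $k(n) = kq(k(n))$. Because $k$ is a kernel it is in particular a monomorphism, so cancelling $k$ on the left yields $q(k(n)) = n$, that is $qk = 1_N$. This is the one step that genuinely uses injectivity of $k$, and it is the heart of the argument.

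Statement (2) is then immediate: since $k$ is a monoid homomorphism $k(1_N)=1_G$, so $q(1) = q(k(1_N)) = qk(1_N) = 1_N$ by part (1). For (3) I would set $g = s(h)k(n)$ and again apply the defining equation; the only nontrivial point is computing $e(s(h)k(n))$. Using that $e$ is a homomorphism together with $es = 1_H$ and $ek(n)=1_H$, we obtain $e(s(h)k(n)) = e(s(h))\cdot e(k(n)) = h\cdot 1_H = h$, whence $se(s(h)k(n)) = s(h)$. Substituting back into $g = kq(g)\cdot se(g)$ gives $s(h)k(n) = kq(s(h)k(n))\cdot s(h)$, which is precisely the claim.

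I expect no serious obstacle here; each part is a short computation once the defining equation is invoked on the right element. The only point requiring a little care is the observation that $k$ being a kernel simultaneously supplies the triviality $ek = 1_H$ used throughout and the injectivity needed to cancel $k$ in part (1).
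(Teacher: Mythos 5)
Your proof is correct and follows essentially the same route as the paper: substitute the appropriate element into the defining equation $g = kq(g)\cdot se(g)$, using that $k$ is a monomorphism whose composite with $e$ is trivial. The only (immaterial) difference is in part (2), where you deduce $q(1)=1$ from part (1) via $1_G = k(1_N)$, whereas the paper substitutes $g=1$ directly into the defining equation and cancels $k$ again.
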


\begin{proof}
(1) Per the definition of $q$, for each $n \in N$ we can write $k(n) = kqk(n) \cdot sek(n) = kqk(n)$. Since $k$ is injective we find that $n = qk(n)$, and so $q$ is a retraction of $k$.

(2) We know that $1 = q(1) \cdot se(1) = q(1)$.

(3) Notice that $e(s(h)k(n)) = h$. Thus we can write $s(h)k(n) = kq(s(h)k(n) \cdot se(s(h)k(n)) = kq(s(h)k(n)) \cdot s(h)$.
\end{proof}

We will make extensive use of \cref{prop:axiom} in \cref{characterization}. For the remainder of this section, as well as in \cref{splitshortfive} we will work choice free. It is likely that, with some thought, the results in \cref{characterization} can be presented in a choice free manner too.

\begin{definition}
The category $\WSExt(H,N)$ is the full subcategory of $\SpltExt(H,N)$ consisting of the weakly Schreier extensions.
\end{definition}

\subsection{Canonical quotient}

Let $\splitext{N}{k}{G}{e}{s}{H}$ be a weakly Schreier extension and consider the set-function $\phi \colon N \times H \to G$ sending $(n,h)$ to $k(n) \cdot s(h)$. The weakly Schreier condition gives that $\phi$ is surjective and so we can quotient $N \times H$ by $\phi$.

\begin{definition}
Let $\splitext{N}{k}{G}{e}{s}{H}$ be a weakly Schreier extension and let $\phi \colon N \times H \to G$ denote the surjective map sending $(n,h)$ to $k(n) \cdot s(h)$. Then let $E(e,s)$ denote the equivalence relation on $N \times H$ induced by $\phi$ where $(n,h) \sim (n',h')$ if and only if $k(n) \cdot s(h) = k(n') \cdot s(h')$. 	
\end{definition}

Thus we can consider the map $\ell \colon N \times H \to (N \times H)/E(e,s)$, where $\ell$ send $(n,h)$ to $[n,h]$, the equivalence class of $(n,h)$ with respect to $E(e,s)$. Naturally we have a bijection $\ophi \colon (N \times H)/E(e,s) \to G$ such that $\ophi\ell = \phi$. 

Before we equip $(N \times H)/E(e,s)$ with a multiplication let us study some properties of $E(e,s)$.

\begin{proposition}\label{sameh}
Let $\splitext{N}{k}{G}{e}{s}{H}$ be a weakly Schreier extension. If $(n_1,h_1) \sim (n_2,h_2)$ in $E(e,s)$, then $h_1=h_2$.
\end{proposition}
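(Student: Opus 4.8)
The plan is to apply the cokernel map $e$ to the defining equation of the relation $E(e,s)$ and simply read off the equality of the $H$-components. By definition, $(n_1,h_1) \sim (n_2,h_2)$ means exactly that $k(n_1) \cdot s(h_1) = k(n_2) \cdot s(h_2)$ in $G$. Since $e$ is a monoid homomorphism, applying it to both sides preserves this equality and distributes over the products, giving $e(k(n_1)) \cdot e(s(h_1)) = e(k(n_2)) \cdot e(s(h_2))$.

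The two facts I would invoke are both immediate from the split-extension structure. First, because $k$ is the kernel of $e$ in the pointed category of monoids, $k$ factors through the fibre of $e$ over the identity, so $ek(n) = 1$ for every $n \in N$. Second, the splitting condition $es = 1_H$ gives $e(s(h)) = h$. Substituting these into the equation above collapses the left side to $1 \cdot h_1 = h_1$ and the right side to $1 \cdot h_2 = h_2$, yielding $h_1 = h_2$.

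There is essentially no obstacle here: the argument is a single application of $e$ followed by the two identities above. The only point worth stating carefully is that $ek$ is the constant map at the identity of $H$, which is precisely what it means for $k$ to be the kernel of $e$ in $\Mon$; everything else is routine.
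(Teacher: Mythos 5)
Your proof is correct and follows exactly the paper's route: apply the homomorphism $e$ to the defining equation $k(n_1)\cdot s(h_1) = k(n_2)\cdot s(h_2)$ and use $ek = 1$ together with $es = 1_H$. You simply spell out the two identities that the paper's one-line proof leaves implicit.
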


\begin{proof}
Let $(n_1,h_1) \sim (n_2,h_2)$. Then we have that $k(n_1) \cdot s(h_1) = k(n_2) \cdot s(h_2)$. Applying $e$ to both sides yields $h_1 = h_2$ as required.
\end{proof}

\begin{proposition}\label{samen}
Let $\splitext{N}{k}{G}{e}{s}{H}$ be a weakly Schreier extension. If $(n_1,1) \sim (n_2,1)$ in $E(e,s)$, then $n_1 = n_2$.
\end{proposition}

\begin{proof}
Let $(n_1,1) \sim (n_2,1)$. Then we have that $k(n_1) \cdot s(1) = k(n_2) \cdot s(1)$. Since $s(1) = 1$ this gives that $k(n_1) = k(n_2)$ which further implies that $n_1 = n_2$, as $k$ is injective.
\end{proof}

Combining \cref{sameh} and \cref{samen} we get that for each $n \in N$, the equivalence class $[n,1]$ is a singleton.

\begin{proposition}\label{welldefinedaction}
Let $\splitext{N}{k}{G}{e}{s}{H}$ be a weakly Schreier extension. If $(n_1,h) \sim (n_2,h)$ in $E(e,s)$ then for all $n \in N$ we have $(nn_1,h) \sim (nn_2,h)$, and for all $h' \in H$ we have $(n_1,hh') \sim (n_2,hh')$.	
\end{proposition}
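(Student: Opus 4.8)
The plan is to unwind the definition of the relation $E(e,s)$ and exploit the fact that $(n_1,h)\sim(n_2,h)$ means precisely the equation $k(n_1)\cdot s(h)=k(n_2)\cdot s(h)$ in $G$. From here the strategy is to left-multiply by $k(n)$ in the first case and right-multiply by $s(h')$ in the second, using the homomorphism property of $k$ and $s$ together with the relation $es=1_H$ to reassemble each side into the required $k(\cdot)\cdot s(\cdot)$ form. Both claims should reduce to routine computations once the correct quantity is multiplied on the correct side.

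For the first claim, I would start from $k(n_1)\cdot s(h)=k(n_2)\cdot s(h)$ and multiply both sides on the left by $k(n)$. Since $k$ is a monoid homomorphism we have $k(n)\cdot k(n_i)=k(nn_i)$, so the equation becomes $k(nn_1)\cdot s(h)=k(nn_2)\cdot s(h)$, which is exactly the statement that $(nn_1,h)\sim(nn_2,h)$ in $E(e,s)$.

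For the second claim, I would start from the same equation and multiply both sides on the right by $s(h')$. Because $s$ is a homomorphism, $s(h)\cdot s(h')=s(hh')$, giving $k(n_1)\cdot s(hh')=k(n_2)\cdot s(hh')$, which says precisely that $(n_1,hh')\sim(n_2,hh')$. This completes both parts.

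I do not anticipate a genuine obstacle here: the proposition is essentially the statement that the congruence defined by $\phi$ is stable under the left $N$-action and the right $H$-action on $N\times H$, and this stability is immediate from the multiplicativity of $k$ and $s$. The only thing to be careful about is respecting the order of multiplication, since the monoids $N$, $G$, and $H$ are not assumed commutative; one must left-multiply in the first case and right-multiply in the second, matching the positions of $k(n)$ and $s(h')$ in the defining expression $k(\cdot)\cdot s(\cdot)$.
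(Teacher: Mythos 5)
Your proof is correct and is essentially identical to the paper's: both left-multiply the defining equation $k(n_1)\cdot s(h)=k(n_2)\cdot s(h)$ by $k(n)$ and right-multiply it by $s(h')$, using the multiplicativity of $k$ and $s$ to recombine the terms. (The relation $es=1_H$ you mention in your plan is not actually needed anywhere.)
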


\begin{proof}
Let $(n_1,h) \sim (n_2,h)$. Then we have that $k(n_1) \cdot s(h) = k(n_2) \cdot s(h)$. Hence $k(n) \cdot k(n_1) \cdot s(h) = k(n) \cdot k(n_2) \cdot s(h)$ and $k(n_1) \cdot s(h) \cdot s(h') = k(n_2) \cdot s(h) \cdot s(h')$ which gives that $(nn_1,h) \sim (nn_2,h)$ and that $(n_1,hh') \sim (n_2,hh')$.	
\end{proof}

Now let us discuss the monoid structure of $N \times H /E(e,s)$. It inherits its multiplication and identity from $G$ through $\ophi$ --- that is, we define $[n,h] \cdot [n',h'] = \ophi\inv(\ophi([n,h])\ophi([n',h'])).$ The identity is readily seen to be $[1,1]$. By construction $\ophi$ preserves multiplication and so we see that $((N \times H)/E(e,s),\cdot,[1,1])$ is isomorphic to $G$. In \cref{characterization} we will give an explicit description of this multiplication, making use of \cref{prop:axiom}.

For convenience, we now let $(N \times H)/E(e,s)$ denote the monoid $((N \times H)/E(e,s),\cdot,[1,1])$ and we might think of it as a weaker notion of a semidirect product. We call this construction a \emph{weak semidirect product} of $H$ by $N$. This choice of terminology will be fully justified in \cref{characterization} after the full characterization of weakly Schreier extensions.

In fact $(N \times H)/E(e,s) $ is part of a split extension isomorphic to $\splitext{N}{k}{G}{e}{s}{H}$. Consider \[\splitext{N}{k'}{(N \times H)/E(e,s)}{e'}{s'}{H},\] where $k'(n) = [n,1]$, $e'([n,h]) = h$ and $s'(h) = [1,h]$. By \cref{sameh} we have that $e'$ is well defined. Note that $[n,1] \cdot [1,h] = [n,h]$ and so this extension is weakly Schreier by construction.

We can now conclude the following.

\begin{proposition}\label{ophiiso}
The map $\ophi$ is an isomorphism of split extensions between $\splitext{N}{k}{G}{e}{s}{H}$ and \\$\splitext{N}{k'}{(N \times H)/E(e,s)}{e'}{s'}{H}$.	
\end{proposition}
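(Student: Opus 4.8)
The plan is to exploit the fact that, by construction, $\ophi$ is already a monoid isomorphism: the multiplication on $(N \times H)/E(e,s)$ was defined precisely so that $\ophi$ transports it from $G$, and $\ophi([1,1]) = 1$. Thus the only genuine content of the statement is that $\ophi$ is a \emph{morphism} of split extensions, i.e.\ that the three squares relating the structure maps $(k',e',s')$ of the quotient to $(k,e,s)$ commute. An isomorphism in $\SpltExt(H,N)$ is exactly such a morphism whose underlying monoid map is bijective (the inverse of a bijective monoid map is again a monoid map, and it automatically makes the reversed squares commute), so once the squares commute the claim follows immediately.

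First I would check the two outer squares by direct evaluation on representatives. For the kernel square, $\ophi k'(n) = \ophi([n,1]) = \phi(n,1) = k(n) \cdot s(1) = k(n)$, using $s(1) = 1$, so $\ophi k' = k$. For the splitting square, $\ophi s'(h) = \ophi([1,h]) = \phi(1,h) = k(1) \cdot s(h) = s(h)$, using $k(1) = 1$, so $\ophi s' = s$. Both reduce to unitality of $k$ and $s$ together with the defining formula $\ophi\ell = \phi$.

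Next I would verify the cokernel square $e\ophi = e'$, which is the only step using more than unitality. Here $e\ophi([n,h]) = e(k(n) \cdot s(h)) = e(k(n)) \cdot e(s(h))$ since $e$ is a homomorphism. Because $k$ is the kernel of $e$ we have $ek(n) = 1$, and because $es = 1_H$ we have $es(h) = h$; multiplying these gives $e\ophi([n,h]) = h = e'([n,h])$, as required.

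With all three squares commuting and $\ophi$ a monoid isomorphism, $\ophi$ is an isomorphism of split extensions. I anticipate no substantive obstacle: the verification is routine, the only point meriting a word of care being that one should read off the relations $k' = \ophi\inv k$, $s' = \ophi\inv s$ and $e' = e\ophi$ from the commuting squares. This observation does double duty, since it simultaneously confirms that $k'$, $s'$ and $e'$ are genuine monoid homomorphisms and that the target diagram really satisfies the split extension axioms, both properties transporting along the isomorphism $\ophi$ from the given extension $\splitext{N}{k}{G}{e}{s}{H}$.
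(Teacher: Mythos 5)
Your proposal is correct and takes essentially the same route as the paper, which states the proposition as an immediate consequence of the construction (multiplication on $(N \times H)/E(e,s)$ transported along $\ophi$, with $k'$, $e'$, $s'$ defined so that the squares commute) and leaves the routine verification implicit. Your explicit check of the three squares and the transport of the split-extension axioms along $\ophi$ is exactly the content the paper omits.
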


%
%

\begin{example}{(Schreier extensions)}\label{Schreierexample}
Let $\splitext{N}{k}{G}{e}{s}{H}$ be a Schreier extension and $q \colon G \to N$ the associated Schreier retraction. Recall that the semidirect product construction (as in \cite{martins2013semidirect}) applied to the above Schreier extension, gives $(N \times H, \cdot, (1,1))$ where $(n,h) \cdot (n',h') = (n \cdot q(s(h)k(n')),hh')$. Let us show that this agrees with our weak semidirect product construction on weakly Schreier extensions.

Let $\splitext{N}{k}{G}{e}{s}{H}$ be a Schreier extension, let $\phi$ be defined as above and consider $(N \times H)/E(e,s)$. Observe that the Schreier condition gives that $(n,h) \sim (n',h')$ if and only if $(n,h) = (n',h')$. Thus $N \times H/E(e,s)$ as a set is just the product $N \times H$. This agrees with the semidirect product construction and so we must just show that the two constructions agree on multiplication.

For our weak semidirect product we define $(n,h) \cdot (n',h') = \phi\inv(\phi(n,h)\phi(n',h'))$ which is the unique element in $N \times H$ which $\phi$ sends to $k(n) \cdot s(h) \cdot k(n') \cdot s(h')$. Thus we need only show that $\phi(n \cdot q(s(h)k(n')),hh') = k(n) \cdot s(h) \cdot k(n') \cdot s(h')$.

Per the definition of $q$ we see that $kq(s(h)k(n)) \cdot s(h) = s(h) \cdot k(n)$. Thus 
\begin{align*}
	\phi(n \cdot q(s(h)k(n')),hh') &= k(n) \cdot kq(s(h)k(n')) \cdot s(h) \cdot s(h') \\
	&= k(n) \cdot s(h) \cdot k(n') \cdot s(h').
\end{align*}

Thus we see that the weak semidirect product construction agrees with the semidirect product construction on Schreier extensions.
\end{example}

\begin{example} {(Artin glueings)}\label{Artin}

Artin glueings are our primary examples of weakly Schreier extensions that are not Schreier. Artin glueings are usually considered as subobjects of the product and not quotients. For frames $N$ and $H$ and finite meet preserving map $f \colon H \to N$, the Artin glueing $\Gl(f)$ is the frame of pairs $(n,h)$ where $n \le f(h)$, with componentwise meets and joins.
In this example we discuss how an Artin glueing can also be viewed as a quotient of the product.

Let $N,G$ and $H$ be frames considered as monoids with multiplication given by meet and consider a weakly Schreier extension $\splitext{N}{k}{G}{e}{s}{H}$. As shown in \cite{faul2019artin}, $k$ has a left adjoint $k^*$ which is an associated Schreier retraction and, in fact, a monoid map. Furthermore we find that $s$ must be the right adjoint $e_*$ of $e$.

Now let $\phi \colon N \times H \to G$ be defined in the usual way and consider the equivalence classes it generates. We know that $kk^*(g)\wedge e_*e(g) = g$ and thus the equivalence class corresponding to each $g$ has a canonical choice of representative given by $(k^*(g),e(g))$. This allows us to represent the inverse $\ophi\inv(g) = [k^*(g),e(g)]$.

Starting with the fact that for all $g$ we have that $g \le e_*e(g)$, we apply $k^*$ to both sides and arrive at $k^*(g) \le k^*e_*e(g)$. This means that all of the canonical elements $(k^*(g),e(g))$ are elements of $\Gl(k^*e_*)$. Furthermore, if $(n,h) \in \Gl(k^*e_*)$ then we have $k^*(k(n) \wedge e_*(h)) = k^*k(n) \wedge k^*e_*(h) = n \wedge k^*e_*(h) = n.$ Thus $(n,h) = (k^*(k(n) \wedge e_*(h)), e(k(n) \wedge e_*(h)))$ and so we conclude that the canonical representatives are precisely the pairs in $\Gl(k^*e_*)$. 

Looking at the prescribed multiplication on $N \times H/E(e,s)$ we see
\begin{align*}
[k^*(g),e(g)]\cdot [k^*(g'),e(g')] 	&= \ophi\inv(g \wedge g') \\
						&= [k^*(g \wedge g'),e(g \wedge g')] \\
						&= [k^*(g) \wedge k^*(g'), e(g) \wedge e(g')].
\end{align*}

This means that when we are dealing with the canonical representations of each class, multiplication is just taking the meet componentwise.

Taken together we see that $(N \times H)/E(e,s) $ is isomorphic to $\Gl(k^*e_*)$ as required.
\end{example}

\section{Failure of the Split Short Five Lemma} \label{splitshortfive}

For any S-protomodular category in the sense of \cite{bourn2015Sprotomodular}, it was shown in that same paper that the split short five lemma holds. This lemma says that when given two split extension $\splitext{N}{k_1}{G_1}{e_1}{s_1}{H}$ and $\splitext{N}{k_2}{G_2}{e_2}{s_2}{H}$, if $\psi \colon G_1 \to G_2$ is a morphism of split extensions, then $\psi$ is an isomorphism. 
Since weakly Schreier extensions are only S-protomodular in a weaker sense \cite{bourn2015partialMaltsev}, the split short five lemma need not hold and in fact does not. In this section we study the morphisms of $\WSExt(H,N)$ before providing a complete characterization of them in \cref{characterization}.

\begin{theorem} \label{theoremsplit}

Let $\splitext{N}{k_1}{G_1}{e_1}{s_1}{H}$ and $\splitext{N}{k_2}{G_2}{e_2}{s_2}{H}$ be two weakly Schreier extensions, let $E_1 = E(e_1,s_1)$ and $E_2 = E(e_2,s_2)$ and let $\psi \colon (N \times H)/E_1 \to (N \times H)/E_2$ be a morphism of the following weakly Schreier extensions. 

\begin{center}
   \begin{tikzpicture}[node distance=2.5cm, auto]
    \node (A) {$N$};
    \node (B) [right of=A] {$(N \times H)/E_1$};
    \node (C) [right of=B] {$H$};
    \node (D) [below of=A] {$N$};
    \node (E) [right of=D] {$(N \times H)/E_2$};
    \node (F) [right of=E] {$H$};
    \draw[normalTail->] (A) to node {$k_1'$} (B);
    \draw[transform canvas={yshift=0.5ex},-normalHead] (B) to node {$e_1'$} (C);
    \draw[transform canvas={yshift=-0.5ex},->] (C) to node {$s_1'$} (B);
    \draw[normalTail->] (D) to node {$k_2'$} (E);
    \draw[transform canvas={yshift=0.5ex},-normalHead] (E) to node {$e_2'$} (F);
    \draw[transform canvas={yshift=-0.5ex},->] (F) to node {$s_2'$} (E);
    \draw[->] (B) to node {$\psi$} (E);
    \draw[double equal sign distance] (A) to (D);
    \draw[double equal sign distance] (C) to (F);
   \end{tikzpicture}
\end{center}

Then $\psi([n,h]_{E_1}) = [n,h]_{E_2}$.
\end{theorem}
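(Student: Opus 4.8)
The plan is to exploit the canonical factorization of elements in the quotient monoid together with the fact that a morphism of split extensions is pinned down on exactly two types of element. Recall from the construction that in $(N \times H)/E_1$ every element splits as $[n,h]_{E_1} = [n,1]_{E_1} \cdot [1,h]_{E_1}$, and that by definition of the extension $[n,1]_{E_1} = k_1'(n)$ and $[1,h]_{E_1} = s_1'(h)$. So every element is written as $k_1'(n) \cdot s_1'(h)$, and the whole theorem reduces to understanding how $\psi$ treats the images of $k_1'$ and $s_1'$.

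First I would distribute $\psi$ over this product using that it is a monoid homomorphism, obtaining
\[
\psi([n,h]_{E_1}) = \psi\bigl(k_1'(n)\bigr) \cdot \psi\bigl(s_1'(h)\bigr).
\]
Next I would apply the commuting-square conditions that define a morphism of split extensions: the left (kernel) square gives $\psi k_1' = k_2'$, and compatibility with the splittings gives $\psi s_1' = s_2'$. (The cokernel square $e_2'\psi = e_1'$ is not needed.) Substituting yields $\psi([n,h]_{E_1}) = k_2'(n) \cdot s_2'(h)$. Running the same factorization in the target, $k_2'(n) \cdot s_2'(h) = [n,1]_{E_2} \cdot [1,h]_{E_2} = [n,h]_{E_2}$, which is exactly the claim.

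There is no serious obstacle; once the canonical factorization is in hand the argument is essentially bookkeeping. The only point deserving a moment's care is confirming that $[n,h] = [n,1]\cdot[1,h]$ genuinely holds in the quotient monoid, which follows from how the multiplication was transported from $G$ through $\ophi$ (namely $[n,1]\cdot[1,h] = \ophi\inv(\phi(n,1)\phi(1,h)) = \ophi\inv(k(n)s(h)) = [n,h]$), as already observed in the construction. The conceptual content is simply that a morphism of split extensions is determined by its action on the images of the kernel and the splitting, and the weakly Schreier condition forces those images to generate the entire monoid. This also explains, incidentally, why the split short five lemma fails: $\psi$ is forced to be the canonical relabelling map $[n,h] \mapsto [n,h]$, which need be neither injective nor surjective when $E_1$ and $E_2$ differ.
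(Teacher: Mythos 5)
Your proof is correct and follows essentially the same route as the paper: decompose $[n,h]_{E_1} = [n,1]_{E_1}\cdot[1,h]_{E_1} = k_1'(n)\cdot s_1'(h)$, use that $\psi$ is a monoid homomorphism satisfying $\psi k_1' = k_2'$ and $\psi s_1' = s_2'$, and reassemble in the target. Your extra check that $[n,1]\cdot[1,h] = [n,h]$ holds in the quotient is a point the paper establishes in its construction of the weak semidirect product, so nothing is missing.
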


\begin{proof}
Since $\psi k_1'(n) = k_2'(n)$, we find that $\psi([n,1]_{E_1}) = [n,1]_{E_2} $. Similarly we find that $\psi([1,h]_{E_1}) = [1,h]_{E_2} $. Now observe that
\begin{align*}
\psi([n,h]_{E_1}) 	&= \psi([n,1]_{E_1}[1,h]_{E_1}) \\
						&= \psi([n,1]_{E_1}) \cdot \psi([1,h]_{E_1}) \\
						&= [n,1]_{E_2}[1,h]_{E_2} \\
						&= [n,h]_{E_2}.
\end{align*}
This completes the proof.
\end{proof}

Notice that this is in agreement with the Schreier case, as there the equivalence classes are all singletons and so \cref{theoremsplit} implies that any morphism between Schreier extensions must be the identity.

From \cref{theoremsplit} we can conclude that any morphism between weakly Schreier extensions must be unique. We thus arrive at the following corollary.

\begin{corollary}
$\WSExt(H,N)$ is a preorder category for all monoids $H$ and $N$.
\end{corollary}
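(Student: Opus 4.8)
The plan is to unwind the definition: a category is a preorder (equivalently, a thin category) precisely when there is at most one morphism between any two objects. So I would fix arbitrary weakly Schreier extensions $\mathcal{G}_1$ and $\mathcal{G}_2$ together with two parallel morphisms $\psi, \psi' \colon \mathcal{G}_1 \to \mathcal{G}_2$ in $\WSExt(H,N)$, and aim to prove $\psi = \psi'$.

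First I would reduce to the canonical quotient form. By \cref{ophiiso}, every weakly Schreier extension $\splitext{N}{k}{G}{e}{s}{H}$ is isomorphic, as a split extension, to its canonical form $\splitext{N}{k'}{(N \times H)/E(e,s)}{e'}{s'}{H}$ via $\ophi$. Writing $\ophi_1$ and $\ophi_2$ for the isomorphisms attached to $\mathcal{G}_1$ and $\mathcal{G}_2$, I would conjugate to obtain $\ophi_2\inv \psi \ophi_1$ and $\ophi_2\inv \psi' \ophi_1$. The key observation is that a composite of morphisms of split extensions, together with the inverse of an isomorphism of split extensions, is again a morphism of split extensions (the three squares still commute, and the kernel and cokernel legs are preserved). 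Hence both conjugates are genuine morphisms $(N \times H)/E_1 \to (N \times H)/E_2$ in $\WSExt(H,N)$.

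Then I would invoke \cref{theoremsplit}: any such morphism must send $[n,h]_{E_1}$ to $[n,h]_{E_2}$. Since this formula determines the morphism completely and makes no reference to which morphism we began with, both conjugates coincide with this single canonical map, and therefore agree with each other. Undoing the conjugation, using that $\ophi_1$ and $\ophi_2$ are invertible, yields $\psi = \psi'$, which is exactly the thinness condition.

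I do not expect a genuine obstacle here, since the substance has already been absorbed into \cref{theoremsplit}; what remains is transport of structure along the isomorphisms of \cref{ophiiso}. The only point requiring care is the bookkeeping with $\ophi_1$ and $\ophi_2$: checking that conjugation lands in the correct hom-set and that the commuting squares are preserved. An equivalent and perhaps cleaner route would be to observe that $\WSExt(H,N)$ is equivalent to its full subcategory of canonical-form extensions, on which \cref{theoremsplit} directly gives thinness, and then use that thinness is invariant under equivalence of categories.
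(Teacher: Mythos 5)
Your proposal is correct and is essentially the paper's own argument: the corollary is deduced directly from \cref{theoremsplit}, which pins down the unique possible form of any morphism between weakly Schreier extensions. The paper leaves the transport along the isomorphisms of \cref{ophiiso} implicit, whereas you spell out the conjugation bookkeeping explicitly; that is added detail, not a different route.
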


In fact this result can be generalised to any $S$-protomodular category in the sense of \cite{bourn2015partialMaltsev}, as all that is required is that for each extension $\splitext{N}{e}{G}{e}{s}{H}$ in $S$, $(k,s)$ is jointly extremally epic.

Of course \cref{theoremsplit} does not by itself demonstrate that the split short five lemma fails in the case of weakly Schreier extensions. In the following section we will completely determine the conditions which yield a morphism between two weakly Schreier extensions. For now we will demonstrate the failure of the lemma by exhibiting a non-isomorphism between two extensions of Artin glueings.

\begin{example}(Artin glueings)

Let $N$ and $H$ be frames, $f,g \colon H \to N$ meet preserving maps and $\psi \colon \Gl(f) \to \Gl(g)$ a morphism of the following extensions.

\begin{center}
   \begin{tikzpicture}[node distance=1.5cm, auto]
    \node (A) {$N$};
    \node (B) [right of=A] {$\Gl(f)$};
    \node (C) [right of=B] {$H$};
    \node (D) [below of=A] {$N$};
    \node (E) [right of=D] {$\Gl(g)$};
    \node (F) [right of=E] {$H$};
    \draw[normalTail->] (A) to node {$k_1$} (B);
    \draw[transform canvas={yshift=0.5ex},-normalHead] (B) to node {$e_1$} (C);
    \draw[transform canvas={yshift=-0.5ex},->] (C) to node {$s_1$} (B);
    \draw[normalTail->] (D) to node {$k_2$} (E);
    \draw[transform canvas={yshift=0.5ex},-normalHead] (E) to node {$e_2$} (F);
    \draw[transform canvas={yshift=-0.5ex},->] (F) to node {$s_2$} (E);
    \draw[->] (B) to node {$\psi$} (E);
    \draw[double equal sign distance] (A) to (D);
    \draw[double equal sign distance] (C) to (F);
   \end{tikzpicture}
\end{center}

Applying our results in \cite{faul2019artin} to the above diagram we find that $k_1(n) = (n,1)$, $k_2(n) = (n,1)$, $s_1(h) = (f(h),h)$ and $s_2(h) = (g(h),h)$.

Since $\psi$ is a morphism of split extensions we have that $\psi(n,1) = (n,1)$ and $\psi(f(h),h) = (g(h),h)$. This is enough to completely determine $\psi$ as we have

\begin{align*}
\psi(n,h)   &= \psi((n,1) \wedge (f(h),h)) \\
            &= \psi(n,1) \wedge \psi(f(h),h) \\
			&= (n,1) \wedge (g(h),h) \\
			&= (n \wedge g(h),h).	
\end{align*}

However to ensure consistency we require that $(n,1) = \psi(n,1) = (n \wedge g(1),1)$ and that $(g(h),h) = \psi(f(h),h) = (f(h) \wedge g(h),h)$. The former expression will always be true, but the latter is true if and only if $g \le f$.

Thus whenever $g \le f$, we have that $\psi$ as described above will be a morphism of weakly Schreier extensions. Whenever $g$ is strictly less than $f$, it is evident that this $\psi$ is not an isomorphism. For instance let $N$ be a non-trivial frame and let $N = H$. Then take $f$ to be the constant $1$ map and $g$ to be the idenity.
\end{example}

The above example and its consequences are explored in more detail in \cite{faul2019artin}.

\section{Characterizing weakly Schreier extensions}\label{characterization}

In the Schreier case extensions correspond to actions $\alpha \colon H \times N \to N$, which are used to construct a multiplication on the set $N \times H$. If we are to do something similar in the weakly Schreier case, must define a multiplication on some quotient of $N \times H$. The question is then: what are the appropriate quotients to consider and how can we induce the appropriate monoid operations? In what proceeds we will make use of the axiom of choice in the form of \cref{prop:axiom}.

\subsection{The quotient}\label{quotientdef}

Let us tackle the question of the quotients first. We do so by considering the properties of a quotient constructed from a weakly Schreier extension 
as in \cref{weaksemidirectproducts}. 

Here we take inspiration from \cref{sameh,samen,welldefinedaction} and consider only the quotients $Q$ of $N \times H$ whose corresponding equivalence relation satisies the following conditions.

\begin{definition}
Let $E$ be an equivalence relation on $N \times H$. We say $E$ is an \emph{admissible equivalence relation} if it satisfies the following conditions.
\begin{enumerate}
	\item $(n_1,1) \sim (n_2,1)$ implies $n_1 = n_2$,
	\item $(n_1,h_1) \sim (n_2,h_2)$ implies $h_1 = h_2$,
	\item for all $n \in N$, $(n_1,h) \sim (n_2,h)$ implies $(nn_1,h) \sim (nn_2,h)$ and
	\item for all $h' \in H$, $(n_1,h) \sim (n_2,h)$ implies $(n_1,hh') \sim (n_2,hh')$.
\end{enumerate}

We call the induced quotient $N \times H/E$ an \emph{admissible quotient} if $E$ is admissible. 
\end{definition}

Given an admissible quotient $Q$ on $N \times H$ we write $(n,h) \sim_Q (n',h)$ to mean that $(n,h)$ belongs to the same equivalence class as $(n',h)$.

Notice that when $h$ has a right inverse, condition (1) and (4) together imply the following.

\begin{proposition}\label{inverses}
Let $Q$ be an admissible quotient on $N \times H$. If $h \in H$ has a right inverse, then $(n,h) \sim (n',h)$ implies $n = n'$.
\end{proposition}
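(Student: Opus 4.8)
The plan is to exploit the right inverse of $h$ to convert a statement about pairs $(n,h)$ back into a statement about pairs with second coordinate $1$, where condition (1) of admissibility forces equality. Suppose $h$ has a right inverse $h'$, so that $hh' = 1$, and suppose $(n,h) \sim (n',h)$. The key observation is that condition (4) of the definition of admissible equivalence relation lets me multiply on the right by $h'$ in the second coordinate: applying it with this particular $h'$ yields $(n,hh') \sim (n',hh')$, that is, $(n,1) \sim (n',1)$.

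Once I have $(n,1) \sim (n',1)$, condition (1) immediately gives $n = n'$, which is exactly the conclusion. So the whole argument is the two-line chain
\begin{align*}
(n,h) \sim (n',h) &\implies (n,hh') \sim (n',hh') \\
&\implies (n,1) \sim (n',1) \implies n = n'.
\end{align*}

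I would note that the proposition's parenthetical remark that ``condition (1) and (4) together imply the following'' matches this exactly: condition (4) supplies the passage from $h$ to $hh' = 1$, and condition (1) finishes the job. I do not expect any genuine obstacle here, since the existence of a right inverse is precisely what makes condition (4) applicable with a collapsing choice of $h'$; the only thing to be careful about is orienting the inverse correctly (a \emph{right} inverse $h'$ with $hh' = 1$, so that right-multiplication in the second coordinate lands on the identity), and confirming that conditions (2) and (3) play no role in this particular direction of the argument.
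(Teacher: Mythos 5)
Your proof is correct and matches the paper's intended argument exactly: the paper gives no separate proof, only the preceding remark that conditions (1) and (4) of admissibility together imply the result, which is precisely the two-step chain you spell out (right-multiply the second coordinate by the right inverse via condition (4), then invoke condition (1)). Nothing is missing, and your care about the orientation of the inverse ($hh' = 1$) is the only subtlety there was.
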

 
In particular this means that for groups, the only admissible quotient will be the discrete one. This is consistent with the observation that all split extensions of groups are Schreier.

We now consider an action of $N$ on $Q$ and an action of $H$ on $Q$, which will be well-defined thanks to conditions (3) and (4) above. For each $n' \in N$ let $n' \ast [n,h] = [n'n,h]$ and for each $h' \in H$ let $[n,h] \ast h' = [n,hh']$. Equipped with these actions we can consider $Q$ to be similar in character to a bi-module.

\subsection{The multiplication}

Suppose we have a weakly Schreier extension $\splitext{N}{k}{G}{e}{s}{H}$ and let $q \colon G \to N$ be an associated Schreier retraction. Let us construct the associated weak semidirect product $(N \times H)/E(e,s)$ as in \cref{weaksemidirectproducts} and examine the multiplication in more detail. In this section we make extensive use of (3) in \cref{prop:reviewersuggestion}, which says that $kq(s(h)k(n)) \cdot s(h) = s(h)k(n)$.

\begin{proposition}\label{startweak}
Let $\splitext{N}{k}{G}{e}{s}{H}$ be a weakly Schreier extension and let $q$ be an associated Schreier retraction. For $[n_1,h_1], [n_2,h_2] \in (N \times H)/E(e,s)$ we can equivalently express the multiplication as \[[n_1,h_1] \cdot [n_2,h_2] = n_1 \ast [q(s(h_1)k(n_2)),h_1] \ast h_2.\] 	
\end{proposition}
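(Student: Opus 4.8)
The plan is to compute the product $[n_1,h_1]\cdot[n_2,h_2]$ directly from its definition and show it equals the claimed expression. By definition the multiplication on $(N\times H)/E(e,s)$ is inherited from $G$ through $\ophi$, so $[n_1,h_1]\cdot[n_2,h_2]$ is the unique class that $\ophi$ sends to $\ophi([n_1,h_1])\,\ophi([n_2,h_2]) = k(n_1)s(h_1)\cdot k(n_2)s(h_2)$. Equivalently, since $\ophi\ell=\phi$, it is the class $[\,p\,]$ of any pair $p\in N\times H$ with $\phi(p)=k(n_1)s(h_1)k(n_2)s(h_2)$. So the whole task reduces to producing one such pair and identifying it with the right-hand side.

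First I would rewrite the right-hand side using the bi-module-style actions defined in \cref{quotientdef}: by definition $n_1\ast[q(s(h_1)k(n_2)),h_1]\ast h_2 = [\,n_1\cdot q(s(h_1)k(n_2)),\,h_1h_2\,]$. So it suffices to check that this single pair, namely $\bigl(n_1\cdot q(s(h_1)k(n_2)),\,h_1h_2\bigr)$, lies in the correct equivalence class, i.e.\ that $\phi$ of it equals $k(n_1)s(h_1)k(n_2)s(h_2)$. Computing,
\[
\phi\bigl(n_1\cdot q(s(h_1)k(n_2)),\,h_1h_2\bigr) = k(n_1)\cdot kq(s(h_1)k(n_2))\cdot s(h_1h_2) = k(n_1)\cdot kq(s(h_1)k(n_2))\cdot s(h_1)s(h_2),
\]
using that $k$ is a monoid homomorphism and $s(h_1h_2)=s(h_1)s(h_2)$ since $s$ is a homomorphism.

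The key step is then the identity $kq(s(h_1)k(n_2))\cdot s(h_1) = s(h_1)k(n_2)$, which is precisely part (3) of \cref{prop:reviewersuggestion}. Substituting this into the middle of the expression collapses it to $k(n_1)\cdot s(h_1)k(n_2)\cdot s(h_2)$, which is exactly $\ophi([n_1,h_1])\,\ophi([n_2,h_2])$. Hence the pair $\bigl(n_1\cdot q(s(h_1)k(n_2)),h_1h_2\bigr)$ represents the product class, giving the stated formula.

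I do not expect a genuine obstacle here: the argument is a short computation whose only real content is the application of \cref{prop:reviewersuggestion}(3). The one point requiring a word of care is well-definedness --- that the right-hand side does not depend on the chosen representatives $n_1,n_2$ or on the choice of retraction $q$ --- but this is automatic, since we have exhibited the right-hand side as the image under $\ophi\inv$ of a product of elements of $G$ that depends only on the classes $[n_1,h_1]$ and $[n_2,h_2]$; equivalently, admissibility of $E(e,s)$ (conditions (3) and (4)) guarantees the actions $\ast$ are well defined on classes. I would remark on this briefly rather than belabour it.
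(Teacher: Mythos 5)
Your proposal is correct and follows essentially the same route as the paper's proof: unwind the actions to identify the right-hand side as the class $[n_1\cdot q(s(h_1)k(n_2)),h_1h_2]$, compute its image under $\ophi$, and collapse it to $k(n_1)s(h_1)k(n_2)s(h_2)$ via part (3) of \cref{prop:reviewersuggestion}. The additional remark on well-definedness is fine but not needed, since the multiplication is defined through the bijection $\ophi$.
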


\begin{proof}
We must show that $n_1 \ast [q(s(h_1)k(n_2)),h_1] \ast h_2$ is sent by $\ophi$ to $k(n_1) \cdot s(h_1) \cdot k(n_2) \cdot s(h_2).$

We know that \[\ophi(n_1 \cdot q(s(h_1)k(n_2)),h_1h_2) = k(n_1) \cdot kq(s(h_1)k(n_2)) \cdot s(h_1) \cdot s(h_2).\]

Since $kq(s(h)k(n)) \cdot s(h) = s(h) \cdot k(n)$, the above expression simplifies to $k(n_1) \cdot s(h_1) \cdot k(n_2) \cdot s(h_2)$ as required.
\end{proof}

This presentation of the multiplication suggests that something resembling the actions of the Schreier case will play an equally crucial role in defining the multiplication on $Q$. 

Note that since the multiplication of $(N \times H)/E(e,s)$ was defined \Cref{weaksemidirectproducts} in  without any reference to Schreier retractions, it must be that all Schreier retractions induce the same multiplication.

\begin{corollary}\label{cor:chooseretraction}
Let $\splitext{N}{k}{G}{e}{s}{H}$ be a weakly Schreier extension and let $q$ and $q'$ be Schreier retractions. Then $[n_1q(s(h_1)k(n_2)),h_1h_2] = [n_1q'(s(h_1)k(n_2)),h_1h_2]$ for all $n_1,n_2 \in N$ and $h_1,h_2 \in H$.
\end{corollary}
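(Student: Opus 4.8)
The plan is to deduce Corollary~\ref{cor:chooseretraction} directly from Proposition~\ref{startweak}, which is the whole point of stating it as a corollary. The key observation is that Proposition~\ref{startweak} expresses the multiplication $[n_1,h_1]\cdot[n_2,h_2]$ in a form that visibly depends on the choice of Schreier retraction $q$, namely as $n_1 \ast [q(s(h_1)k(n_2)),h_1]\ast h_2$, yet the left-hand side was defined in \Cref{weaksemidirectproducts} with no reference to any retraction at all. First I would apply Proposition~\ref{startweak} twice, once with $q$ and once with $q'$, to obtain
\begin{align*}
n_1 \ast [q(s(h_1)k(n_2)),h_1]\ast h_2 &= [n_1,h_1]\cdot[n_2,h_2] \\
&= n_1 \ast [q'(s(h_1)k(n_2)),h_1]\ast h_2.
\end{align*}
Since the middle term is independent of the retraction, the two outer expressions must be equal.

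Next I would unwind the actions $\ast$ using their definitions from \Cref{quotientdef}: recall that $n_1 \ast [n,h] = [n_1 n, h]$ and $[n,h]\ast h' = [n, hh']$. Applying these to each side, the left becomes $[n_1 q(s(h_1)k(n_2)), h_1 h_2]$ and the right becomes $[n_1 q'(s(h_1)k(n_2)), h_1 h_2]$. Equating them yields exactly the claimed identity $[n_1 q(s(h_1)k(n_2)),h_1h_2] = [n_1 q'(s(h_1)k(n_2)),h_1h_2]$, and the proof is complete.

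I do not anticipate a genuine obstacle here, since the corollary is an immediate consequence of the fact that Proposition~\ref{startweak}'s right-hand side computes a retraction-independent quantity. The only point requiring minor care is making sure the bookkeeping of the bimodule-style actions is carried out correctly, so that $n_1 \ast [q(s(h_1)k(n_2)),h_1]\ast h_2$ really does collapse to the single equivalence class $[n_1 q(s(h_1)k(n_2)), h_1 h_2]$ as stated; this is routine given the definitions of $\ast$. It is worth noting in passing that this corollary is precisely what licenses defining the multiplication on an abstract admissible quotient via a chosen retraction in the subsequent characterization, so although its proof is short, its role is structurally important.
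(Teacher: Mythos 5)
Your proof is correct and is essentially the paper's own argument: the paper justifies \cref{cor:chooseretraction} by the same observation that the multiplication on $(N\times H)/E(e,s)$ was defined in \cref{weaksemidirectproducts} via $\ophi$ with no reference to any retraction, while \cref{startweak} computes it as $n_1 \ast [q(s(h_1)k(n_2)),h_1]\ast h_2 = [n_1q(s(h_1)k(n_2)),h_1h_2]$ for any choice of retraction. Your explicit unwinding of the $\ast$ actions is exactly the intended bookkeeping.
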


Now let $\alpha \colon H \times N \to N$ send $(h,n)$ to $q(s(h)k(n))$ and let us study its properties.

\begin{proposition}\label{compatability1}
Let $\splitext{N}{k}{G}{e}{s}{H}$ be a weakly Schreier extension, $q$ an associated Schreier retraction and let $\alpha(b,a) = q(s(b)k(a))$. If $(n_1,h) \sim (n_2,h)$, then $n_1 \ast [\alpha(h,n),h] = n_2 \ast [\alpha(h,n),h]$.
\end{proposition}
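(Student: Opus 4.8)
The plan is to unfold everything in terms of the defining condition of $E(e,s)$ and then reduce to property (3) of \cref{prop:reviewersuggestion}. First I would use the left action to rewrite both sides as honest equivalence classes: by definition $n_i \ast [\alpha(h,n),h] = [n_i\alpha(h,n),h]$ for $i = 1,2$. The goal then becomes showing $(n_1\alpha(h,n),h) \sim (n_2\alpha(h,n),h)$, which by the definition of $E(e,s)$ amounts to the equality $k(n_1\alpha(h,n)) \cdot s(h) = k(n_2\alpha(h,n)) \cdot s(h)$ in $G$.

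Next I would simplify each side by passing into $G$. Since $k$ is a monoid map, $k(n_i\alpha(h,n)) = k(n_i) \cdot k(\alpha(h,n))$, and since $\alpha(h,n) = q(s(h)k(n))$, property (3) of \cref{prop:reviewersuggestion} gives $k(\alpha(h,n)) \cdot s(h) = kq(s(h)k(n)) \cdot s(h) = s(h) \cdot k(n)$. Substituting, each side collapses to the common form $k(n_i) \cdot s(h) \cdot k(n)$.

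Finally I would invoke the hypothesis. The assumption $(n_1,h) \sim (n_2,h)$ means precisely $k(n_1) \cdot s(h) = k(n_2) \cdot s(h)$, so right-multiplying by $k(n)$ yields $k(n_1) \cdot s(h) \cdot k(n) = k(n_2) \cdot s(h) \cdot k(n)$, which is exactly the equality to which the two sides were reduced. This establishes $(n_1\alpha(h,n),h) \sim (n_2\alpha(h,n),h)$ and hence the claim.

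The step I expect to be the main conceptual obstacle is recognising that admissibility condition (3) --- closure under left multiplication by a common element --- does \emph{not} directly apply here, because the common factor $\alpha(h,n)$ sits to the \emph{right} of $n_1$ and $n_2$ rather than to the left. The resolution is to bypass the quotient-level closure conditions altogether and compute in $G$, where property (3) of \cref{prop:reviewersuggestion} lets one slide $k(\alpha(h,n))$ past $s(h)$ and convert it into a factor of $k(n)$ on the far right; once that rewriting is done, the hypothesis closes the argument immediately.
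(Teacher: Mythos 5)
Your proof is correct and follows essentially the same route as the paper: both reduce the claim to an equality in $G$ (the paper via injectivity of $\ophi$, you via the definition of $E(e,s)$, which amounts to the same thing), use property (3) of \cref{prop:reviewersuggestion} to rewrite $k\alpha(h,n)\cdot s(h)$ as $s(h)\cdot k(n)$, and then apply the hypothesis $k(n_1)\cdot s(h) = k(n_2)\cdot s(h)$. Your closing remark about why admissibility condition (3) is not directly applicable is also accurate and reflects exactly why the paper computes in $G$ rather than at the level of the quotient.
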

 
\begin{proof}
Suppose that $(n_1,h) \sim (n_2,h)$ and consider the following calculation.

\begin{align*}
\ophi(n_1 \ast [\alpha(h,n),h]) &= k(n_1) \cdot k\alpha(h,n) \cdot s(h)\\
&= k(n_1) \cdot s(h) \cdot k(n)\\
&= k(n_2) \cdot s(h) \cdot k(n)\\
&= k(n_2) \cdot k\alpha(h,n) \cdot s(h)\\
&= \ophi(n_2 \ast [\alpha(h,n),h])
\end{align*} 

Since $\ophi$ is injective, we must have that $n_1 \ast [\alpha(h,n),h] = n_2 \ast [\alpha(h,n),h]$.
\end{proof}

Similarly we have the following proposition.

\begin{proposition}\label{compatability2}
Let $\splitext{N}{k}{G}{e}{s}{H}$ be a weakly Schreier extension, $q$ an associated Schreier retraction and let $\alpha(b,a) = q(s(b)k(a))$. If $(n,h') \sim (n',h')$, then $[\alpha(h,n),h] \ast h' = [\alpha(h,n'),h] \ast h'$.	
\end{proposition}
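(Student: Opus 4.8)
The plan is to mirror the proof of \cref{compatability1}: apply the bijection $\ophi$ to both sides of the desired identity, simplify each side to the same element of $G$, and then conclude by injectivity of $\ophi$. First I would unwind the right action, since by definition $[\alpha(h,n),h] \ast h' = [\alpha(h,n),hh']$ and likewise $[\alpha(h,n'),h] \ast h' = [\alpha(h,n'),hh']$. Thus the goal reduces to showing $\ophi([\alpha(h,n),hh']) = \ophi([\alpha(h,n'),hh'])$.

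Next I would compute $\ophi([\alpha(h,n),hh']) = k\alpha(h,n) \cdot s(hh')$, and use that $s$ is a monoid map to split this as $k\alpha(h,n) \cdot s(h) \cdot s(h')$. The crucial move is to invoke property (3) of \cref{prop:reviewersuggestion}, namely $k\alpha(h,n) \cdot s(h) = s(h)k(n)$, which turns the expression into $s(h) \cdot k(n) \cdot s(h')$. The identical computation for $n'$ yields $s(h) \cdot k(n') \cdot s(h')$.

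Finally I would bring in the hypothesis $(n,h') \sim (n',h')$, which unpacks as $k(n) \cdot s(h') = k(n') \cdot s(h')$. Because this is exactly the factor appearing to the right of $s(h)$ in my simplified expressions, left-multiplying by $s(h)$ gives $s(h) \cdot k(n) \cdot s(h') = s(h) \cdot k(n') \cdot s(h')$. Hence $\ophi$ sends both classes to the same element of $G$, and since $\ophi$ is injective, $[\alpha(h,n),h] \ast h' = [\alpha(h,n'),h] \ast h'$, as required.

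I do not anticipate a genuine obstacle; the argument is essentially dual to that of \cref{compatability1}, with the right action playing the role the left action played there. The only point needing care is the \emph{placement} of the hypothesis relative to the rewrite coming from property (3): in \cref{compatability1} the matching factor $k(n_i) \cdot s(h)$ sits on the \emph{left} of the simplified product, whereas here property (3) exposes the matching factor $k(n) \cdot s(h')$ on the \emph{right}, so I must be sure to perform the rewrite first and only then substitute via the hypothesis.
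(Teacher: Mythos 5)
Your proof is correct and is essentially the paper's own argument: the paper omits the proof of \cref{compatability2} entirely, stating only that it follows ``similarly'' to \cref{compatability1}, and your computation --- applying $\ophi$, rewriting $k\alpha(h,n)\cdot s(h) = s(h)k(n)$ via property (3) of \cref{prop:reviewersuggestion}, substituting the hypothesis $k(n)\cdot s(h') = k(n')\cdot s(h')$, and concluding by injectivity of $\ophi$ --- is precisely the intended parallel argument. Your closing remark correctly identifies the one genuine asymmetry (the hypothesis now matches a factor on the right of the simplified product rather than the left), and you order the rewrites so that this causes no difficulty.
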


%
%
%

Given an admissible quotient $Q$, any maps $\alpha \colon H \times N \to N$ satisfying \cref{compatability1} and \cref{compatability2} we call \emph{pre-actions compatible with $Q$}.

Next we show that $\alpha$ satisfies conditions analogous to being an action in the Schreier case.

\begin{proposition}\label{multiplicativelaw}
Let $\splitext{N}{k}{G}{e}{s}{H}$ be a weakly Schreier extension, $q$ an associated Schreier retraction and let $\alpha(b,a) = q(s(b)k(a))$. Then $[\alpha(h,nn'),h] = [\alpha(h,n) \cdot \alpha(h,n'),h]$.
\end{proposition}

\begin{proof}
In order to prove that these classes are equal we show that $\ophi$ maps them to the same element of $G$. Thus consider
\begin{align*}
\ophi([\alpha(h,nn'),h]) &= kq(s(h)k(nn')) \cdot s(h) \\
&= s(h) \cdot k(n) \cdot k(n').	
\end{align*}
We also have
\begin{align*}
\ophi([\alpha(h,n)\alpha(h,n'),h]) &= kq(s(h)k(n)) \cdot kq(s(h)k(n')) \cdot s(h) \\
&= kq(s(h)k(n)) \cdot s(h) \cdot k(n') \\
&= s(h) \cdot k(n) \cdot k(n').	
\end{align*}
As discussed above, this gives that $[\alpha(h,nn'),h] = [\alpha(h,n) \cdot \alpha(h,n'),h]$.
\end{proof}

\begin{proposition}\label{compositionallaw}
Let $\splitext{N}{k}{G}{e}{s}{H}$ be a weakly Schreier extension, $q$ an associated Schreier retraction and let $\alpha(b,a) = q(s(b)k(a))$. Then $[\alpha(hh',n),hh'] = [\alpha(h,\alpha(h',n)),hh']$.
\end{proposition}

\begin{proof}
We need only show that $\ophi$ maps each class to the same element. We have
\begin{align*}
\ophi([\alpha(hh',n),hh']) &= kq(s(hh')k(n)) \cdot s(hh')\\
&= s(h) \cdot s(h') \cdot k(n).
\end{align*}

Compare it to the following.
\begin{align*}
\ophi([\alpha(h,\alpha(h',n)),hh']) &= kq(s(h)kq(s(h')k(n))) \cdot s(h) \cdot s(h') \\
&= s(h) \cdot kq(s(h')k(n)) \cdot s(h') \\
&= s(h) \cdot s(h') \cdot k(n)
\end{align*}

Thus $\ophi$ sends $[\alpha(hh',n),hh']$ and $[\alpha(h,\alpha(h',n)),hh']$ to the same element and so they are equal.
\end{proof}

\begin{proposition}
Let $\splitext{N}{k}{G}{e}{s}{H}$ be a weakly Schreier extension, $q$ an associated Schreier retraction and let $\alpha(b,a) = q(s(b)k(a))$. Then $[\alpha(h,1),h] = [1,h]$.
\end{proposition}

\begin{proof}
Just consider $\ophi([\alpha(h,1),h] = kq(s(h))s(h) = s(h) = \ophi([1,h])$ and observe that $\ophi$ is injective.
\end{proof}

The following result is proved similarly.

\begin{proposition}
Let $\splitext{N}{k}{G}{e}{s}{H}$ be a weakly Schreier extension, $q$ an associated Schreier retraction and let $\alpha(b,a) = q(s(b)k(a))$. Then $[\alpha(1,n),1] = [n,1]$.
\end{proposition}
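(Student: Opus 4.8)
The plan is to follow the template of the immediately preceding proposition verbatim: exploit the injectivity of $\ophi$ by showing that both equivalence classes $[\alpha(1,n),1]$ and $[n,1]$ are mapped to the same element of $G$. Since $\ophi$ is a bijection (established when the multiplication on $(N \times H)/E(e,s)$ was defined), equality of the images forces equality of the classes, so the entire argument reduces to a short identity computation in $G$.

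First I would compute $\ophi([\alpha(1,n),1])$. Unfolding the definitions of $\ophi$ and of $\alpha(b,a) = q(s(b)k(a))$ gives $\ophi([\alpha(1,n),1]) = kq(s(1)k(n)) \cdot s(1)$. Because $s$ is a monoid homomorphism we have $s(1) = 1$, and this collapses to $kq(k(n))$. The decisive step is then part (1) of \cref{prop:reviewersuggestion}, which asserts $qk = 1_N$; applying it yields $kq(k(n)) = k(n)$. In short, $\alpha(1,n) = q(s(1)k(n)) = q(k(n)) = n$, so the retraction property does all the work.

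For the second class, $\ophi([n,1]) = k(n) \cdot s(1) = k(n)$ again by $s(1) = 1$. Thus $\ophi$ sends both $[\alpha(1,n),1]$ and $[n,1]$ to $k(n)$, and injectivity of $\ophi$ completes the argument. I expect no genuine obstacle here: unlike \cref{compositionallaw} or \cref{multiplicativelaw}, where one must repeatedly invoke the identity $kq(s(h)k(n)) \cdot s(h) = s(h)k(n)$ from \cref{prop:reviewersuggestion}(3) to untangle nested applications of $q$, this statement only needs the much cheaper facts $s(1) = 1$ and $qk = 1_N$. The mild subtlety worth flagging is simply ensuring that $s(1) = 1$ is invoked (rather than left implicit) so that the factor $s(1)$ on the right disappears cleanly; once that is noted, the computation is immediate.
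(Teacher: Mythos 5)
Your proposal is correct and matches the paper's approach: the paper proves this proposition ``similarly'' to the preceding one, i.e.\ by computing the images of both classes under $\ophi$ in $G$ and invoking injectivity of $\ophi$, exactly as you do. Your use of $s(1)=1$ together with $qk = 1_N$ from \cref{prop:reviewersuggestion}(1) is the natural specialization (equivalently one could apply \cref{prop:reviewersuggestion}(3) with $h=1$), so there is no substantive difference.
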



We now say an action is a compatible pre-action which satisfies the above properties.

\begin{definition}\label{actiondef}
A function $\alpha \colon H \times N \to N$ is an \emph{action} with respect to an admissible quotient $Q$ of $N \times H$ if it satisfies the following conditions:

\begin{enumerate}
	\item $(n_1,h) \sim (n_2,h)$ implies $n_1 \ast [\alpha(h,n),h] = n_2 \ast [\alpha(h,n),h]$ for all $n \in N$,
	\item $(n,h') \sim (n',h')$ implies $[\alpha(h,n),h] \ast h' = [\alpha(h,n'),h] \ast h'$ for all $h \in H$,
	\item $[\alpha(h,nn'),h] = [\alpha(h,n)\cdot\alpha(h,n'),h]$,
	\item $[\alpha(hh',n),hh'] = [\alpha(h,\alpha(h',n)),hh']$,
	\item $[\alpha(h,1),h] = [1,h]$,
	\item $[\alpha(1,n),1] = [n,1]$.
\end{enumerate}
	
\end{definition}

Let us again draw attention to this definition as it applies to groups. By \cref{inverses}, the only admissible quotient is discrete and so conditions (1) and (2) are immediately satisfied. The remaining four conditions then reduce to requiring that $\alpha$ be an action in the traditional sense. This same argument gives that $\alpha$ must be an action in the Schreier setting.

Let $\mathrm{Act}_Q$ denote the set of actions with respect to $Q$. Given a quotient $Q$ and an action $\alpha \in \mathrm{Act}_Q$ we can equip $Q$ with a multiplication as follows.
\[[n,h] \cdot [n',h'] = n \ast [\alpha(h,n'),h] \ast h'.\]

Let us check that this is well defined. Suppose that $[a,h] = [n,h]$ and that $[a',h'] = [n',h']$.

Then by condition 1 of \cref{actiondef} we have that $[a,h][n',h'] = [n,h][n',h']$. Then applying condition 2 of \cref{actiondef} we get that $[a,h][a',h'] = [a,h][n',h']$. Thus this operation is well defined and it remains to prove it is associative and has an identity.

\begin{proposition}
Let $Q$ be an admissible quotient of $N \times H$ and $\alpha \in \mathrm{Act}_Q$. Then $[n,h][n',h'] = n \ast [\alpha(h,n'),h] \ast h'$ makes $Q$ a monoid with identity $[1,1]$.	
\end{proposition}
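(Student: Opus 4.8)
The plan is to first unwind the two bimodule actions so that the product acquires the transparent form
\[[n,h]\cdot[n',h'] = [\,n\cdot\alpha(h,n'),\,hh'\,],\]
since $n\ast[\alpha(h,n'),h]=[n\alpha(h,n'),h]$ and then $[n\alpha(h,n'),h]\ast h' = [n\alpha(h,n'),hh']$. With the operation in this explicit shape, both the unit laws and associativity become assertions about equality of classes sharing a common second coordinate, which is exactly the kind of statement the action axioms (3)--(6) of \cref{actiondef} and the admissibility conditions (3)--(4) are designed to handle. Well-definedness having already been checked, it remains to verify the identity and associativity.

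For the identity I would treat the two sides separately. The right unit gives $[n,h]\cdot[1,1] = [n\alpha(h,1),h]$; condition (5) of \cref{actiondef} says $(\alpha(h,1),h)\sim(1,h)$, and applying the left action $n\ast(-)$ (legitimate by admissibility condition (3)) upgrades this to $(n\alpha(h,1),h)\sim(n,h)$. The left unit gives $[1,1]\cdot[n,h] = [\alpha(1,n),h]$; condition (6) gives $(\alpha(1,n),1)\sim(n,1)$, and applying the right action $(-)\ast h$ (legitimate by admissibility condition (4)) lifts this to $(\alpha(1,n),h)\sim(n,h)$. Hence $[1,1]$ is a two-sided identity.

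For associativity I would compute both composites explicitly,
\[\bigl([n,h][n',h']\bigr)[n'',h''] = [\,n\,\alpha(h,n')\,\alpha(hh',n''),\,hh'h''\,],\]
\[[n,h]\bigl([n',h'][n'',h'']\bigr) = [\,n\,\alpha\!\bigl(h,\,n'\alpha(h',n'')\bigr),\,hh'h''\,].\]
Since the two classes share the second coordinate $hh'h''$, it suffices to connect their first coordinates by a chain of equivalences at that level. The multiplicative law (condition (3)) gives $(\alpha(h,n'\alpha(h',n'')),h)\sim(\alpha(h,n')\alpha(h,\alpha(h',n'')),h)$, and the compositional law (condition (4)) gives $(\alpha(hh',n''),hh')\sim(\alpha(h,\alpha(h',n'')),hh')$. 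Each is an identity whose second coordinate is a proper prefix of $hh'h''$; I would transport the first to level $hh'h''$ by prepending the left factor $n$ via admissibility (3) and appending $h'h''$ via admissibility (4), and transport the second by prepending $n\alpha(h,n')$ and appending $h''$ in the same way. Concatenating the two lifted equivalences rewrites the right-hand first coordinate $n\,\alpha(h,n'\alpha(h',n''))$ into the left-hand one $n\,\alpha(h,n')\,\alpha(hh',n'')$, which closes the argument.

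The only genuine obstacle is bookkeeping: the action axioms hold as equalities of classes whose second coordinate is merely a prefix of $hh'h''$, and they must be carried up to the common coordinate $hh'h''$ without disturbing the first-coordinate relations. This is precisely what admissibility conditions (3) and (4) guarantee — that $\sim$ is preserved under left multiplication by a fixed element of $N$ in the first coordinate and under right multiplication by a fixed element of $H$ in the second — so every transport step is justified and the chain of equivalences closes up.
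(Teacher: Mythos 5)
Your proof is correct and follows essentially the same route as the paper's: the unit laws come from action conditions (5) and (6) lifted by the admissibility conditions, and associativity is obtained by reducing both composites to a common class via the multiplicative law (3) and the compositional law (4). The paper merely packages your explicit transport steps into the bimodule notation $n \ast [-,-] \ast h'$, so the two arguments are the same in substance.
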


\begin{proof}
For the identity simply observe $[n,h][1,1] = n \ast [\alpha(h,1),h] \ast 1 = n \ast [1,h] = [n,h]$ and then $[1,1][n,h] = 1 \ast [\alpha(1,n),1] \ast h = [n,1] \ast h = [n,h]$.

For associativity consider
\begin{align*}
([n_1,h_1][n_2,h_2])[n_3,h_3] 
&= [n_1 \cdot \alpha(h_1,n_2), h_1h_2][n_3,h_3]\\
&= 	[n_1 \cdot \alpha(h_1,n_2) \cdot \alpha(h_1h_2,n_3),h_1h_2h_3] \\
&= n_1 \alpha(h_1,n_2) \ast [\alpha(h_1h_2,n_3),h_1h_2] \ast h_3 \\
&= n_1 \alpha(h_1,n_2) \ast [\alpha(h_1, \alpha(h_2,n_3)), h_1h_2] \ast h_3
\end{align*}

and compare it to
\begin{align*}
[n_1,h_1]([n_2,h_2][n_3,h_3]) &= [n_1,h_1][n_2 \cdot \alpha(h_2,n_3), h_2h_3]\\
&= [n_1 \cdot \alpha(h_1,n_2 \cdot \alpha(h_2,n_3)),h_1h_2h_3]\\
&= n_1 \ast [\alpha(h_1, n_2\alpha(h_2,n_3)),h_1]\ast h_2h_3\\
&= n_1 \ast [\alpha(h_1,n_2) \cdot \alpha(h_1,\alpha(h_2,n_3),h_1] \ast h_2h_3\\
&= n_1\alpha(h_1,n_2) \ast [\alpha(h_1,\alpha(h_2,n_3)),h_1h_2] \ast h_3.
\end{align*}

Thus this operation is indeed associative and so $Q$ becomes a monoid. 	
\end{proof}

Let us call the resulting monoid $Q_{\alpha}$.

We may now state the result that justifies our use of weak semidirect product.

\begin{theorem}\label{canon}
The diagram $\splitext{N}{k}{Q_{\alpha}}{e}{s}{H}$ where $k(n) = [n,1]$, $e([n,h]) = h$ and $s(h) = [1,h]$, is a weakly Schreier extension.
\end{theorem}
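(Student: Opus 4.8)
The plan is to check, one by one, the three conditions defining a split extension together with the weakly Schreier condition, drawing throughout on the six axioms of \cref{actiondef} and the four admissibility conditions. Recall that the multiplication on $Q_\alpha$ is $[n,h]\cdot[n',h'] = [n\cdot\alpha(h,n'),hh']$, and that the preceding proposition already guarantees $(Q_\alpha,\cdot,[1,1])$ is a monoid, so none of that needs to be redone.

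First I would verify that $k$, $s$ and $e$ are monoid homomorphisms. For $k$, note $k(n)k(n') = [n\cdot\alpha(1,n'),1]$; since axiom (6) gives $[\alpha(1,n'),1]=[n',1]$, admissibility (1) forces $\alpha(1,n')=n'$, so $k(n)k(n')=[nn',1]=k(nn')$. For $s$, we have $s(h)s(h')=[\alpha(h,1),hh']$; axiom (5) gives $(\alpha(h,1),h)\sim(1,h)$, and admissibility (4) propagates this to $(\alpha(h,1),hh')\sim(1,hh')$, whence $s(h)s(h')=[1,hh']=s(hh')$. The map $e$ is well defined by admissibility (2), and $e([n,h]\cdot[n',h'])=hh'=e([n,h])e([n',h'])$ is immediate; all three send the identity to the identity. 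The equality $es=1_H$ is then read off directly, since $es(h)=e([1,h])=h$. The weakly Schreier condition is equally easy: using $\alpha(1,1)=1$ (the special case $n'=1$ of the computation for $k$), one computes $k(n)\cdot se([n,h])=[n,1]\cdot[1,h]=[n,h]$, so every class $[n,h]$ is exhibited in the required form with witness $n$.

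It remains to identify $k$ as the kernel of $e$ and $e$ as the cokernel of $k$ in the pointed category of monoids. For the kernel, $ek$ is trivial and $\ker e = \{[n,1] : n\in N\}$; admissibility (1) says $n\mapsto[n,1]$ is injective, so given any $f$ with $ef$ trivial each $f(x)$ has the form $[n_x,1]$ with $n_x$ unique, and $x\mapsto n_x$ is the unique factorization through $k$, which is therefore a homomorphism since $k$ is an injective one. For the cokernel I would use that $e$ is surjective, so any factorization is automatically unique, together with the decomposition $[n,h]=[n,1]\cdot[1,h]$. Given a monoid map $g\colon Q_\alpha\to M$ with $gk$ trivial, set $\bar g(h)=g([1,h])=g(s(h))$; this is a homomorphism because $s$ is, and $\bar g e([n,h])=g([1,h])=g([n,1])g([1,h])=g([n,h])$, using $g([n,1])=1$, so $\bar g e = g$ as needed.

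I expect the cokernel verification to be the main obstacle, as it is the only genuinely categorical step rather than a purely computational one and requires producing the universal factorization explicitly; everything else reduces to short manipulations combining the action axioms with the admissibility conditions.
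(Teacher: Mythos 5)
Your proposal is correct and takes essentially the same route as the paper's proof: establish the weakly Schreier condition via $[n,1]\cdot[1,h]=[n,h]$, identify $k$ as the kernel by showing its image is exactly $e^{-1}(1)$ with injectivity from admissibility (1), and obtain the cokernel by factoring any $g$ with $gk$ trivial through $h \mapsto g([1,h])$, with uniqueness from surjectivity of $e$. The only difference is one of detail, not of method: you explicitly verify that $k$, $s$ and $e$ are monoid homomorphisms using the action axioms and admissibility conditions, a step the paper leaves implicit.
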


\begin{proof}
Observe that for every element $[n,h]$ we can write $k(n)s(h) = [n,1][1,h] = n \ast [\alpha(1,1),1] \ast h = n \ast [1,1] \ast h = [n,h]$. So it satisfies the weakly Schreier condition. Thus it remains only to show that $k$ is the kernel of $e$, and $e$ the cokernel of $k$.

It is clear that $ek = 0$ and further that the image of $k$ is precisely the submonoid sent to $1$. Thus for any $t \colon X \to G$ satisfying $et=0$, it must map into the image $k$. It is clear then that $t$ factors uniquely through $k$. 

Next suppose we have a map $t \colon Q \to X$ satisfying $tk = 0$. Consider $t([n,h]) = t([n,1][1,h]) = t([1,h]).$ Thus where $t$ sends a class is entirely determined by where it sends $[1,h]$. Thus define $t' \colon H \to X$ which sends $h$ to $t([1,h])$. It is clear that $t = t'e$ and is unique as $e$ is epic.
\end{proof}

\begin{proposition}
Two actions $\alpha, \alpha' \in \mathrm{Act}_Q$ induce isomorphic weakly Schreier extensions on $Q$ if and only if for all $(h,n) \in H \times N$ we have $[\alpha(h,n),h] = [\alpha'(h,n),h]$.
\end{proposition}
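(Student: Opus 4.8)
The plan is to treat the two implications separately, leaning almost entirely on \cref{theoremsplit}. For the backward implication, assume $[\alpha(h,n),h] = [\alpha'(h,n),h]$ for all $(h,n) \in H \times N$. Then the two multiplications agree on the nose: for any $[n,h],[n',h'] \in Q$ we have
\[ n \ast [\alpha(h,n'),h] \ast h' = n \ast [\alpha'(h,n'),h] \ast h', \]
so the monoids $Q_\alpha$ and $Q_{\alpha'}$ are identical. Since the structure maps $k,e,s$ of \cref{canon} do not depend on the action, the identity map $\id_Q$ is then an isomorphism of weakly Schreier extensions, and this direction is immediate.

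For the forward implication, suppose $\psi \colon Q_\alpha \to Q_{\alpha'}$ is an isomorphism of weakly Schreier extensions. Both extensions are built on the same admissible quotient $Q$, so $E_1 = E_2 = E$, and they share the same maps $k,e,s$; hence \cref{theoremsplit} applies and forces $\psi([n,h]) = [n,h]$. In other words, the rigidity supplied by \cref{theoremsplit} does the heavy lifting: any candidate isomorphism is already the identity on the underlying set of $Q$.

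It then remains to unwind what it means for this set-theoretic identity to be a monoid homomorphism $Q_\alpha \to Q_{\alpha'}$. From $\psi([n,h] \cdot_\alpha [n',h']) = \psi([n,h]) \cdot_{\alpha'} \psi([n',h'])$ together with $\psi = \id$ we obtain
\[ n \ast [\alpha(h,n'),h] \ast h' = n \ast [\alpha'(h,n'),h] \ast h' \]
for all $n,n',h,h'$; specialising to $n = 1$ and $h' = 1$ collapses the two $\ast$-actions and leaves $[\alpha(h,n'),h] = [\alpha'(h,n'),h]$, as desired.

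I expect no genuine obstacle here, since once \cref{theoremsplit} pins $\psi$ to the identity on classes the rest is a one-line specialisation. The only point requiring care is confirming the hypotheses of \cref{theoremsplit}: that the two extensions share the same quotient $Q$ (so that $E_1 = E_2$) and the same structure maps $k, e, s$, which is exactly what makes \emph{morphism of extensions} force $\psi([n,h]) = [n,h]$ rather than merely some map intertwining $k,e,s$.
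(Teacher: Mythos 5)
Your proof is correct and follows essentially the same route as the paper: the backward direction is the identical observation that the two multiplications coincide, and the forward direction is the paper's argument (there phrased as a contradiction) that \cref{theoremsplit} pins $\psi$ to the identity on classes, after which evaluating the homomorphism condition on $[1,h]\cdot[n,1]$ yields $[\alpha(h,n),h] = [\alpha'(h,n),h]$. The only difference is presentational—direct derivation versus contrapositive—so there is nothing substantive to add.
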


\begin{proof}
Suppose that $\alpha$ and $\alpha'$ satisfy that $[\alpha(h,n),h] = [\alpha'(h,n),h]$ and let $[n,h]\cdot_{\alpha}[n',h']$ and $[n,h]\cdot_{\alpha'}[n',h']$ denote the multiplication in $Q_{\alpha}$ and $Q_{\alpha'}$ respectively. Then observe that $[n,h]\cdot_{\alpha}[n',h'] = n\ast [\alpha(h,n'),h] \ast h' = n \ast [\alpha'(h,n'),h] \ast h' = [n,h]\cdot_{\alpha'}[n',h']$.

For the other direction suppose there exists a pair $(h,n) \in H \times N$ such that $[\alpha(h,n),h] \ne [\alpha'(h,n),h]$ and consider the associated weakly Schreier extensions $\splitext{N}{k}{Q_{\alpha}}{e}{s}{H}$ and $\splitext{N}{k'}{Q_{\alpha'}}{e'}{s'}{H}$. Suppose we have an isomorphism of extensions $\psi \colon Q_{\alpha} \to Q_{\alpha'}$. By \cref{theoremsplit} we know that $\psi([n,h]) = [n,h]$, but observe that $\psi([\alpha(h,n),h]) = \psi([1,h]\cdot_{\alpha}[n,1]) = [1,h]\cdot_{\alpha'}[n,1] = [\alpha'(h,n),h] \ne [\alpha(h,n),h]$. This yields a contradiction.
\end{proof}

Let us take the quotient of the set of actions by the equivalence relation given by $\alpha \sim \alpha'$ if and only if $[\alpha(h,n),h] = [\alpha'(h,n),h]$ for all $n \in N$ and $h \in H$. Call this set $\mathrm{Act}_Q/\hspace{-1mm}\sim$. We then have a process which transforms a pair $(Q,[\alpha])$, where $Q$ is an admissible quotient of $N \times H$ and $[\alpha]$ an equivalence class of actions relative to $Q$, into a weakly Schreier extension. 

\begin{proposition}\label{twelldefined}
Let $\splitext{N}{k}{G}{e}{s}{H}$ be weakly Schreier and let $q$ and $q'$ be associated Schreier retractions. Then if $\alpha(h,n) = q(s(h)k(n))$ and $\alpha'(h,n) = q'(s(h)k(n))$ we have that $\alpha \sim \alpha'$ in $\mathrm{Act}_Q/\hspace{-1mm}\sim$.
\end{proposition}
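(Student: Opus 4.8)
The plan is to prove $\alpha \sim \alpha'$ by verifying the defining condition of the equivalence on $\mathrm{Act}_Q$ directly, namely that $[\alpha(h,n),h] = [\alpha'(h,n),h]$ for every $(h,n) \in H \times N$. Here the relevant quotient $Q$ is the canonical one, $N \times H / E(e,s)$, so by the definition of $E(e,s)$ the equality of classes $[a,h] = [b,h]$ unwinds to a single equation in $G$, namely $k(a) \cdot s(h) = k(b) \cdot s(h)$. Thus the whole statement reduces to one identity, with no quotient subtleties beyond recalling how $E(e,s)$ is defined.

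First I would substitute $a = \alpha(h,n) = q(s(h)k(n))$ and $b = \alpha'(h,n) = q'(s(h)k(n))$, so that the goal becomes
\[
kq(s(h)k(n)) \cdot s(h) = kq'(s(h)k(n)) \cdot s(h).
\]
Second, I would invoke \cref{prop:reviewersuggestion}(3). Crucially, that proposition is stated for an \emph{arbitrary} associated Schreier retraction, so it applies verbatim to both $q$ and $q'$, giving
\[
kq(s(h)k(n)) \cdot s(h) = s(h)k(n) = kq'(s(h)k(n)) \cdot s(h).
\]
Both sides collapse to the common value $s(h)k(n)$, the equation holds, and hence the two classes coincide, establishing $\alpha \sim \alpha'$.

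Alternatively, I would note that the result is an immediate specialization of \cref{cor:chooseretraction}: taking $n_1 = 1$, $h_1 = h$, $n_2 = n$ and $h_2 = 1$ there yields exactly $[q(s(h)k(n)),h] = [q'(s(h)k(n)),h]$. I would likely present the direct argument but remark on this as the conceptual reason the proposition is essentially free.

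There is no genuine obstacle here; the only thing to be careful about is the bookkeeping — confirming that property (3) of \cref{prop:reviewersuggestion} is truly retraction-independent (it is, since it was proved for a generic $q$) and that the quotient under consideration is indeed $E(e,s)$, so that equality of classes reduces to equality in $G$ after applying $\ophi$. Once those two points are in place, the computation is a one-line collapse and the proof is complete.
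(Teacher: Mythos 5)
Your proof is correct and in substance matches the paper's: the paper's own proof is exactly your alternative route, namely citing \cref{cor:chooseretraction} and specializing it (by considering the product of $[1,h]$ and $[n,1]$ in $(N \times H)/E(e,s)$, which is $[q(s(h)k(n)),h]$ for either retraction). Your primary direct computation via \cref{prop:reviewersuggestion}(3) merely unpackages that corollary down to the same underlying identity $kq(s(h)k(n))\cdot s(h) = s(h)k(n)$, so it is not a genuinely different argument.
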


\begin{proof}
This follows from \cref{cor:chooseretraction}. Simply consider the result of $[n,1][1,h]$ in $(N \times H)/E(e,s)$.
\end{proof}

In order to show that the above forms a complete characterization of the weakly Schreier extensions between $H$ and $N$ and to simultaneously characterise the morphisms of $\WSExt(H,N)$, we introduce the following preorder.

\begin{definition}
Let $\mathrm{WAct}(H,N)$ be the preorder whose objects are pairs $(Q,[\alpha])$ where $Q$ is an admissible quotient on $N \times H$ and $[\alpha]\in \mathrm{Act}_Q/\hspace{-1mm}\sim$. We say that $(Q,[\alpha]) \le (Q',[\alpha'])$ if and only if $(n,h) \sim_Q (n',h)$ implies that $(n,h) \sim_{Q'} (n',h)$ and $(\alpha(h,n),h) \sim_{Q'} (\alpha'(h,n),h)$.
\end{definition}

The relationship $(Q,[\alpha]) \le (Q',[\alpha'])$ should be thought of as saying that $Q$ is a finer quotient than $Q'$ and that $[\alpha]$ agrees with $[\alpha']$ on $Q'$. 

\begin{theorem}
The categories $\mathrm{WAct}(H,N)$ and $\WSExt(H,N)$ are equivalent.
\end{theorem}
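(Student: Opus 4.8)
The plan is to construct an explicit equivalence of categories by exhibiting functors in both directions and showing they are mutually inverse (up to natural isomorphism, though since both sides are preorders it suffices to check the object-level correspondence is a well-defined order isomorphism). I would define a functor $\Phi \colon \mathrm{WAct}(H,N) \to \WSExt(H,N)$ sending a pair $(Q,[\alpha])$ to the weakly Schreier extension $\splitext{N}{k}{Q_\alpha}{e}{s}{H}$ built in \cref{canon}, which is well defined on equivalence classes $[\alpha]$ by the proposition preceding \cref{twelldefined}. In the reverse direction, I would define $\Psi \colon \WSExt(H,N) \to \mathrm{WAct}(H,N)$ sending a weakly Schreier extension to the pair $(E(e,s),[\alpha])$, where $E(e,s)$ is the canonical admissible quotient from \cref{weaksemidirectproducts} (admissibility being exactly \cref{sameh,samen,welldefinedaction}) and $\alpha(h,n) = q(s(h)k(n))$ for any associated Schreier retraction $q$; this is independent of the choice of $q$ by \cref{twelldefined}.

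The first main step is to check that $\Psi\Phi$ is the identity on objects. Starting from $(Q,[\alpha])$, the extension $Q_\alpha$ has retraction $q([n,h]) = n$ (up to choice), and I would verify that the recovered quotient $E(e,s)$ coincides with $Q$ — using that $(n,h) \sim_Q (n',h)$ iff $k(n)s(h) = k(n')s(h')$ in $Q_\alpha$, which unwinds to the definition of the multiplication — and that the recovered action lands in $[\alpha]$. The second main step is $\Phi\Psi \cong \mathrm{id}$: starting from a weakly Schreier extension, \cref{ophiiso} already gives that it is isomorphic to the extension on $(N\times H)/E(e,s)$, and \cref{startweak} identifies that multiplication with the one induced by $\alpha(h,n)=q(s(h)k(n))$, so $\Phi\Psi$ returns an isomorphic extension.

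The crucial remaining step, and the one I expect to be the main obstacle, is establishing that $\Phi$ and $\Psi$ are \emph{functorial} — that is, that they are monotone with respect to the two preorders and hence that the order structures match. Concretely I would show that a morphism exists from $(Q,[\alpha])$ to $(Q',[\alpha'])$ in $\mathrm{WAct}(H,N)$, i.e. $(Q,[\alpha]) \le (Q',[\alpha'])$, if and only if there is a morphism $Q_\alpha \to Q_{\alpha'}$ of weakly Schreier extensions. The forward direction defines $\psi([n,h]_Q) = [n,h]_{Q'}$, which is well defined precisely because $Q$ is finer than $Q'$ (the first clause of $\le$), and which is a monoid map precisely because $(\alpha(h,n),h) \sim_{Q'} (\alpha'(h,n),h)$ (the second clause) guarantees $\psi$ respects the two multiplications. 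For the converse, \cref{theoremsplit} forces any such $\psi$ to have the form $[n,h]_Q \mapsto [n,h]_{Q'}$, and well-definedness of this assignment together with its multiplicativity recovers exactly the two clauses defining $\le$.

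Since both categories are preorders (the source by construction, the target by the corollary to \cref{theoremsplit}), once the object-level bijection and the biconditional on the existence of morphisms are established, the functors are automatically inverse equivalences: no coherence of natural transformations needs to be checked beyond the unique morphisms, and the equivalence follows. The delicate points to handle carefully are the compatibility between the two clauses of the order relation $\le$ and the monoid-homomorphism condition on $\psi$, and confirming that the assignment $[n,h]_Q \mapsto [n,h]_{Q'}$ is simultaneously well defined \emph{and} multiplicative exactly under the stated hypotheses.
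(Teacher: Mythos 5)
Your proposal is correct and uses the same pair of functors as the paper (its $S$ and $T$), the same key lemmas (\cref{canon}, \cref{twelldefined}, \cref{ophiiso}, \cref{startweak}, \cref{theoremsplit}), and the same checks of the two composites; the genuine difference is in how functoriality of the inverse direction is established. The paper proves monotonicity of $T$ directly: given an arbitrary morphism $\psi \colon G_1 \to G_2$ of weakly Schreier extensions, it verifies the two clauses of $\le$ by explicit calculations involving $\psi$ and the retractions $q_1$, $q_2$. You instead prove that $\Phi$ is an order-embedding onto its image, extracting the converse from \cref{theoremsplit} --- which forces $\psi([n,h]_Q) = [n,h]_{Q'}$ --- with well-definedness of this assignment giving the first clause of $\le$ and multiplicativity applied to $[1,h]\cdot[n,1]$ giving the second (the same trick the paper uses in the unlabelled proposition following \cref{canon}); monotonicity of $\Psi$ then follows formally, since both categories are preorders and every extension is isomorphic to one of the form $Q_\alpha$. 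Your route trims the paper's longest computation, because in $Q_\alpha$ the maps $k$, $e$, $s$ are explicit, at the cost of leaning on the formal fact that a full, faithful, essentially surjective functor between preorder categories is an equivalence --- which, as you note, is harmless here. Two minor imprecisions to repair when writing this up: a Schreier retraction of $Q_\alpha$ is not literally $q([n,h]) = n$ (that formula is ill defined; $q$ must choose a representative of each class, so the recovered action agrees with $\alpha$ only up to the relation $\sim$ on actions, which is all you need), and your criterion for $\sim_Q$ should read $k(n)s(h) = k(n')s(h)$ rather than $k(n')s(h')$.
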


\begin{proof}
Let $S \colon \mathrm{WAct}(H,N) \to \WSExt(H,N)$ send the pair $(Q,[\alpha])$ to $\splitext{N}{k}{Q_\alpha}{e}{s}{H}$, described in \cref{canon}. Let us begin by demonstrating that $S$ preserves the order. 

Suppose that $(Q,[\alpha]) \leq (Q',[\alpha'])$. \Cref{theoremsplit} tells us that any morphism of split extensions between $S(Q,[\alpha])$ and $S(Q',[\alpha])$ must be a map $\psi \colon Q_\alpha \to Q'_{\alpha'}$ sending $[n,h]_Q$ to $[n,h]_{Q'}$. Let us show that indeed $\psi$ is a well-defined morphism of split extensions. 

Suppose that $[n_1,h]_Q = [n_2,h]_Q$. We know that $(Q,[\alpha]) \leq (Q',[\alpha'])$, and so we have that $(n_1,h) \sim_Q (n_2,h)$ implies that $(n_1,h) \sim_{Q'} (n_2,h)$. Thus $ \psi([n_1,h]_Q) = [n_1,h]_{Q'} = [n_2,h]_{Q'} = \psi([n_2,h]_Q)$, which proves that our description of $\psi$ is well-defined. 

Next we need that $\psi$ preserves the operation. In order to prove this we make use of the fact that $(Q,[\alpha]) \leq (Q',[\alpha'])$ implies that $(\alpha(h,n),h) \sim_{Q'} (\alpha'(h,n),h)$ in the following calculation. 

\begin{align*}
\psi([n,h]_Q \cdot [n',h']) 	&= \psi([n \cdot \alpha(h,n'),hh']_Q) \\
									&= [n \cdot \alpha(h,n'),hh']_{Q'} \\
									&= n \ast [\alpha(h,n'),h]_{Q'} \ast h' \\
									&= n \ast [\alpha'(h,n'),h]_{Q'} \ast h' \\
									&= [n,h]_{Q'} \cdot [n',h']_{Q'} 
\end{align*}

It is apparent that $\psi$ makes the required diagram commute and so is a morphism of split extensions. Thus $S$ preserves the order as required.

Let $\splitext{N}{k}{G}{e}{s}{H}$ be a weakly Schreier extension, $E = E(e,s)$ and let $q$ be an associated Schreier retraction. Then let $T \colon \WSExt(H,N) \to \mathrm{WAct}(H,N)$ send $\splitext{N}{k}{G}{e}{s}{H}$ to $((N \times H)/E,[\alpha])$ where $\alpha(h,n) = q(s(h)k(n))$. By \cref{twelldefined} we have that $T$ is well defined. We must show that $T$ respects the preorder structure.

Suppose we have a morphism $\psi \colon G_1 \to G_2$ of weakly Schreier extensions as in the following diagram.

\begin{center}
   \begin{tikzpicture}[node distance=1.5cm, auto]
    \node (A) {$N$};
    \node (B) [right of=A] {$G_1$};
    \node (C) [right of=B] {$H$};
    \node (D) [below of=A] {$N$};
    \node (E) [right of=D] {$G_2$};
    \node (F) [right of=E] {$H$};
    \draw[normalTail->] (A) to node {$k_1$} (B);
    \draw[transform canvas={yshift=0.5ex},-normalHead] (B) to node {$e_1$} (C);
    \draw[transform canvas={yshift=-0.5ex},->] (C) to node {$s_1$} (B);
    \draw[normalTail->] (D) to node {$k_2$} (E);
    \draw[transform canvas={yshift=0.5ex},-normalHead] (E) to node {$e_2$} (F);
    \draw[transform canvas={yshift=-0.5ex},->] (F) to node {$s_2$} (E);
    \draw[->] (B) to node {$\psi$} (E);
    \draw[double equal sign distance] (A) to (D);
    \draw[double equal sign distance] (C) to (F);
   \end{tikzpicture}
\end{center}

Let $q_1$ be an associated Schreier retraction of $\splitext{N}{k_1}{G_1}{e_1}{s_1}{H}$ and $q_2$ an associated Schreier retraction of $\splitext{N}{k_2}{G_2}{e_2}{s_2}{H}$. Further let $E_1 = E(e_1,s_2)$ and $E_2 = E(e_2,s_2)$.

Then in order to show that $T$ is order preserving we must show that $(N \times H/E(e_1,s_1),[\alpha_1]) \leq (N \times H/E(e_2,s_2), [\alpha_2])$. This requires us to show that if $(n,h) \sim_{E_1} (n',h)$ then $(n,h) \sim_{E_2} (n',h)$ and finally that $(\alpha_1(h,n),h) \sim_{E_2} (\alpha_2(h,n),h)$.

Suppose that $(n,h) \sim_{E_1} (n',h)$. This means that $k_1(n) \cdot s_1(h) = k_1(n') \cdot s_1(h)$. In order to show that $(n,h)$ and $(n',h)$ are related in $E_2$, we must show that $k_2(n) \cdot s_2(h) = k_2(n') \cdot s_2(h)$. Consider 
\begin{align*}
k_2(n) \cdot s_2(h)	&= \psi k_1(n) \cdot \psi s_1(h) \\
						&= \psi(k_1(n)s_1(h)) \\
						&= \psi(k_1(n')s_1(h)) \\
						&= k_2(n') \cdot s_2(h).
\end{align*}

In order to show that the second condition holds consider the following calculation.
\begin{align*}
k_2\alpha_1(h,n) \cdot s_2(h)	&= k_2q_1(s_1(h)k_1(n)) \cdot s_2(h) \\
									&= \psi k_1q_1(s_1(h)k_1(n)) \cdot \psi s_1(h) \\
									&= \psi(k_1q_1(s_1(h)k_1(n))s_1(h)) \\
									&= \psi(s_1(h)k_1(n)) \\
									&= s_2(h)k_2(n) \\
									&= k_2q_2(s_2(h)k_2(n)) \cdot s_2(h) \\
									&= k_2\alpha_2(h,n) \cdot s_2(h)
\end{align*}

Thus indeed $((N \times H)/E(e_1,s_1),[\alpha_1]) \leq ((N \times H)/E(e_2,s_2), [\alpha_2])$ and so $T$ preserves the order.

Finally we now show that the functors $T$ and $S$ form an equivalence of categories.

\Cref{ophiiso} and  \cref{startweak} together give us that $ST$ is equivalent to the identity and so we can shift our attention to $TS$.

Suppose we apply $S$ to a pair $(Q,[\alpha])$ and generate the extension $\splitext{N}{k}{Q_\alpha}{e}{s}{H}$. Let $q$ be an associated Schreier retraction. Let us thus define a map $\alpha'(h,n) = q(s(h)k(n))$. So $TS(Q,[\alpha])$ yields the pair $(N \times H/E(e,s), [\alpha'])$. We will now show that $(Q,[\alpha]) = (N \times H/E(e,s), [\alpha'])$.

The pairs $(n,h) \sim (n',h)$ related in $Q$ are precisely those pairs satisfying 
\begin{align*}
k(n) \cdot s(h) &= [n,1] \cdot [1,h] \\ 
                &= [n,h] \\
                &= [n',h] \\ 
                &= k(n') \cdot s(h). 
\end{align*}
These in turn are precisely the pairs $(n,h) \sim (n',h)$ related in $E(e,s)$. Thus we get that $Q = (N \times H)/E(e,s)$.

Per our definition of $\alpha'$ we must show that $[\alpha'(h,n),h] = [\alpha(h,n),h]$ --- that is, $kq(s(h)k(n)) \cdot s(h) = k\alpha(h,n) \cdot s(h)$. Consider

\begin{align*}
kq(s(h)k(n)) \cdot s(h)         &= s(h) \cdot k(n) \\
								&= [\alpha(h,n),h] \\ 
								&= [\alpha(h,n),1] \cdot [1,h] \\
								&= k\alpha(h,n) \cdot s(h).
\end{align*}
 
Thus $[\alpha] = [\alpha']$, which then finally gives that $TS$ is the identity.
\end{proof}

We thus have a full characterization of all weakly Schreier extensions in the category of monoids, as well as a characterization of the morphisms between them given by the following corollary.

\begin{corollary}
Let $\splitext{N}{k_1}{G_1}{e_1}{s_1}{H}$ and $\splitext{N}{k_2}{G_2}{e_2}{s_2}{H}$ be weakly Schreier extensions, $q_1$ and $q_2$ respective associated Schreier retractions and let $E_1 = E(e_1,s_1)$ and $E_2 = E(e_2,s_2)$. Then a morphism $\psi \colon G_1 \to G_2$ of split extensions exists if and only if for all $n\in N$ and $h\in H$ we have $(n,h) \sim_{E_1} (n',h)$ implies that $(n,h) \sim_{E_2} (n',h)$ and $(q_1(s(h)k(n)),h) \sim_{E_2} (q_2(s(h)k(n)),h)$. 	
\end{corollary}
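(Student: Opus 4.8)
The plan is to read this off directly from the equivalence established in the preceding theorem, rather than re-deriving anything by hand. The two facts I would lean on are that $\WSExt(H,N)$ is a preorder (proved as the corollary to \cref{theoremsplit}) and that $\mathrm{WAct}(H,N)$ is a preorder by definition. In a preorder category a morphism between two objects exists precisely when the source precedes the target, so the entire content of this corollary is to translate the assertion ``$\psi$ exists'' across the functor $T$ into the order relation on $\mathrm{WAct}(H,N)$, and then to unwind that relation.

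First I would recall that $T \colon \WSExt(H,N) \to \mathrm{WAct}(H,N)$ sends $\splitext{N}{k_i}{G_i}{e_i}{s_i}{H}$ to the pair $((N \times H)/E_i,[\alpha_i])$, where $\alpha_i(h,n) = q_i(s_i(h)k_i(n))$. Since $S$ and $T$ form an equivalence of categories, $T$ is in particular fully faithful, so it induces a bijection $\Hom(G_1,G_2) \cong \Hom(T(G_1),T(G_2))$. As both hom-sets lie in preorders they contain at most one element, and hence one is inhabited exactly when the other is. Thus a morphism $\psi \colon G_1 \to G_2$ of split extensions exists if and only if $T(G_1) \le T(G_2)$ in $\mathrm{WAct}(H,N)$.

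It then remains only to unwind the definition of $\le$ for the two pairs above. By definition $((N \times H)/E_1,[\alpha_1]) \le ((N \times H)/E_2,[\alpha_2])$ means that $(n,h) \sim_{E_1} (n',h)$ implies $(n,h) \sim_{E_2} (n',h)$, together with $(\alpha_1(h,n),h) \sim_{E_2} (\alpha_2(h,n),h)$ for all $n \in N$ and $h \in H$. Substituting $\alpha_i(h,n) = q_i(s_i(h)k_i(n))$ produces exactly the two stated conditions, completing the argument.

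I do not anticipate any real obstacle, since all the substantive work lives in the equivalence theorem and in the verification that $T$ preserves the order. The single point that warrants care is the appeal to full faithfulness to secure the reverse implication: the body of the equivalence proof only explicitly checks that $T$ preserves $\le$, whereas the biconditional here requires that $T$ also \emph{reflects} it. That reflection is precisely what being one half of an equivalence supplies, and flagging this is the only thing distinguishing the corollary from a verbatim restatement of the order on $\mathrm{WAct}(H,N)$.
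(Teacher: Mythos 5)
Your proposal is correct and takes essentially the same approach as the paper, which offers no separate argument for this corollary but presents it as an immediate consequence of the equivalence between $\mathrm{WAct}(H,N)$ and $\WSExt(H,N)$: since both categories are preorders, a morphism $G_1 \to G_2$ exists precisely when $T(G_1) \le T(G_2)$, and unwinding the definition of $\le$ gives the stated conditions. Your explicit remark that the ``only if'' direction is $T$ preserving the order while the ``if'' direction requires reflection of the order (full faithfulness of $T$, supplied by the equivalence, or equivalently by composing the morphism $ST(G_1) \to ST(G_2)$ with the isomorphisms $ST(G_i) \cong G_i$) correctly identifies the one step the paper leaves implicit.
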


From this characterization, we can deduce the following results about weakly Schreier extensions in the full subcategories of commutative monoids and abelian groups.

\begin{proposition}
Let $\splitext{N}{k}{G}{e}{s}{H}$ be a weakly Schreier extension in which $N$, $G$ and $H$ are commutative. Then if $(Q,[\alpha])$ corresponds to $\splitext{N}{k}{G}{e}{s}{H}$ it must be that $(\alpha(h,n),h) \sim (n,h)$.
\end{proposition}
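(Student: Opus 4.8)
The plan is to unwind the claimed relation $(\alpha(h,n),h)\sim(n,h)$ into the defining condition of $E(e,s)$ and then exploit commutativity of the middle monoid. By definition of the equivalence relation, $(n_1,h)\sim(n_2,h)$ holds precisely when $k(n_1)\cdot s(h)=k(n_2)\cdot s(h)$. Hence, since $\alpha(h,n)=q(s(h)k(n))$, the entire statement reduces to establishing the single identity $k\alpha(h,n)\cdot s(h)=k(n)\cdot s(h)$ in $G$.

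First I would invoke part (3) of \cref{prop:reviewersuggestion}, which asserts that $kq(s(h)k(n))\cdot s(h)=s(h)k(n)$. Rewriting the left-hand side using $\alpha(h,n)=q(s(h)k(n))$, this is exactly $k\alpha(h,n)\cdot s(h)=s(h)\cdot k(n)$. This is the bridge that connects the action $\alpha$ to a product of $s(h)$ and $k(n)$ in the opposite order to the one we want.

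The remaining step is to close the gap between $s(h)\cdot k(n)$ and $k(n)\cdot s(h)$, and this is precisely where commutativity enters: in the commutative setting $s(h)\cdot k(n)=k(n)\cdot s(h)$, so chaining the two equalities yields $k\alpha(h,n)\cdot s(h)=k(n)\cdot s(h)$, which is the condition for $(\alpha(h,n),h)\sim(n,h)$. There is no genuine obstacle here; the calculation is immediate once property (3) is in hand. The only point worth flagging is that the argument uses \emph{only} the commutativity of $G$ (commutativity of $N$ and $H$ is not needed), and that the quotient is independent of the chosen Schreier retraction $q$ by \cref{cor:chooseretraction}, so the conclusion holds for the class $[\alpha]$ regardless of representative.
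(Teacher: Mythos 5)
Your proof is correct and is essentially the paper's argument in concrete form: the paper computes $[n,h] = [n,1]\cdot[1,h] = [1,h]\cdot[n,1] = [\alpha(h,n),h]$ inside the commutative monoid $Q_\alpha \cong G$, which is exactly your identity $k(n)\cdot s(h) = s(h)\cdot k(n) = k\alpha(h,n)\cdot s(h)$ transported along the isomorphism $\ophi$. Your remark that only commutativity of $G$ is needed agrees with the paper, whose proof likewise uses only the commutativity of $Q_\alpha$.
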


\begin{proof}
If $(Q,[\alpha])$ corresponds to $\splitext{N}{k}{G}{e}{s}{H}$ then $Q_\alpha$ is isomorphic to $G$. Thus $Q_\alpha$ is commutative and so $[n,h] = [n,1] \cdot [1,h] = [1,h] \cdot [n,1] = [\alpha(h,n),h]$. 
\end{proof}

Thus $\alpha$ is equivalent to the trivial action and so multiplication in $Q_\alpha$ is given by $[n,h] \cdot [n',h'] = [nn',hh']$. This is of course in agreement with our understanding of the Artin glueing case.

In the subcategory of abelian groups where all admissible quotients must be discrete we then find that the only weakly Schreier extension is the direct product.

\section{Constructing examples}

In this section we concern ourselves with the practicalities of constructing weakly Schreier extensions. 

\subsection{Generalising the Artin glueing}

We present a construction reminiscent of the Artin glueing, for weakly Schreier extensions of $H$ by $N$, where $N$ is commutative.

\begin{proposition}\label{likeartin}
Let $N$ be a commutative monoid and $f \colon H \to N$ a monoid homomorphism. Then the equivalence relation $E$ on $N \times H$ given by $(n,h) \sim (n',h')$ if and only if $n \cdot f(h) = n' \cdot f(h')$ and $h = h'$, is admissible. 
\end{proposition}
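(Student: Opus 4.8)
The plan is to verify directly that $E$ satisfies the four defining conditions of an admissible equivalence relation, after first observing that $E$ is genuinely an equivalence relation. Reflexivity and symmetry are immediate, and transitivity holds because, conjoined with the requirement $h = h'$, the condition $n \cdot f(h) = n' \cdot f(h')$ merely records equality of the single element $n \cdot f(h)$ on each fibre over a fixed $h \in H$. In other words, $E$ is the disjoint union, over $h \in H$, of the kernel pairs of the maps $n \mapsto n \cdot f(h)$, and is therefore an equivalence relation. I would then treat the four admissibility conditions one at a time.

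For the two separation conditions, condition (2) is built into the definition, since $(n_1,h_1) \sim (n_2,h_2)$ already demands $h_1 = h_2$. For condition (1), I would use that $f$ is a monoid homomorphism, so $f(1) = 1$; then $(n_1,1) \sim (n_2,1)$ gives $n_1 = n_1 \cdot f(1) = n_2 \cdot f(1) = n_2$.

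The two compatibility conditions are equally short computations. For condition (3), suppose $(n_1,h) \sim (n_2,h)$, that is, $n_1 \cdot f(h) = n_2 \cdot f(h)$. Multiplying on the left by $n$ and using associativity gives $(n n_1) \cdot f(h) = n \cdot (n_1 \cdot f(h)) = n \cdot (n_2 \cdot f(h)) = (n n_2) \cdot f(h)$, so $(n n_1, h) \sim (n n_2, h)$. For condition (4), I would invoke the multiplicativity of $f$, namely $f(h h') = f(h) \cdot f(h')$; multiplying the equation $n_1 \cdot f(h) = n_2 \cdot f(h)$ on the right by $f(h')$ and using associativity yields $n_1 \cdot f(h h') = n_2 \cdot f(h h')$, which is exactly $(n_1, h h') \sim (n_2, h h')$.

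There is no substantive obstacle here: the proposition reduces to routine manipulation using only associativity together with the identity- and product-preservation of $f$. The one point worth flagging is that commutativity of $N$ --- although assumed in the statement because it is needed in the sequel to furnish a compatible action --- plays no role in establishing admissibility itself; each of the four verifications goes through verbatim for an arbitrary monoid $N$.
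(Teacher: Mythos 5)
Your proof is correct and follows essentially the same route as the paper's: a direct verification of the four admissibility conditions using associativity of $N$ and the fact that $f$ preserves products and the identity. Your closing observation is also accurate --- the paper's own proof never uses commutativity of $N$ either (it is needed only in the following proposition, to make the trivial pre-action $\alpha(h,n)=n$ compatible with the quotient), and you are in fact slightly more careful than the paper in making explicit that condition (1) relies on $f(1)=1$ rather than holding ``by definition''.
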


\begin{proof}
By definition we have that $(n,1) \sim (n',1)$ implies that $n = n'$ and that $(n,h) \sim (n',h')$ implies that $h = h'$. If $(n_1,h) \sim (n_2,h)$ then we get $n_1 \cdot f(h) = n_2 \cdot f(h)$. Multiplying both sides on the left by $n$ yields $n \cdot n_1 \cdot f(h) = n \cdot n_2 \cdot f(h)$ which then gives that $(nn_1,h) \sim (nn_2,h)$ for all $n \in N$. If instead we multiplied both sides of the equation on the right by $f(h')$ and use that $f$ is a monoid homomorphism, we see that $(n_1,hh') \sim (n_2,hh')$ for all $h'$. Thus $E$ induces an admissible quotient as required.
\end{proof}

\begin{proposition}\label{actionartin}
Let $N$ be a commutative monoid. The trivial action $\alpha(h,n) = n$ is compatible with $(N \times H)/E$ for $E$ taken from \cref{likeartin}.
\end{proposition}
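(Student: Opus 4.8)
The plan is to verify directly the two conditions that define a \emph{compatible pre-action}, namely the hypotheses of \cref{compatability1} and \cref{compatability2} (equivalently conditions (1) and (2) of \cref{actiondef}), since these are exactly what ``compatible with $Q$'' requires. First I would unwind the relevant definitions: the relation $E$ of \cref{likeartin} identifies $(n,h)$ with $(n',h')$ precisely when $h = h'$ and $n\cdot f(h) = n'\cdot f(h')$, while the two one-sided actions on the quotient are $n' \ast [n,h] = [n'n,h]$ and $[n,h] \ast h' = [n,hh']$. Substituting the trivial pre-action $\alpha(h,n) = n$ then reduces each condition to the assertion that a particular pair lies in $E$, which can be checked against this explicit description.

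For the first condition I would assume $(n_1,h) \sim (n_2,h)$, that is $n_1 f(h) = n_2 f(h)$, and show $n_1 \ast [\alpha(h,n),h] = n_2 \ast [\alpha(h,n),h]$, which with the trivial pre-action reads $[n_1 n, h] = [n_2 n, h]$, i.e.\ $n_1 n f(h) = n_2 n f(h)$. This is immediate from commutativity of $N$: rearranging gives $n_1 n f(h) = n_1 f(h) n = n_2 f(h) n = n_2 n f(h)$.

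For the second condition I would assume $(n,h') \sim (n',h')$, that is $n f(h') = n' f(h')$, and show $[\alpha(h,n),h] \ast h' = [\alpha(h,n'),h] \ast h'$, which reduces to $[n,hh'] = [n',hh']$, i.e.\ $n f(hh') = n' f(hh')$. Here I would invoke that $f$ is a homomorphism to write $f(hh') = f(h)f(h')$, and then use commutativity once more: $n f(h) f(h') = n f(h') f(h) = n' f(h') f(h) = n' f(h) f(h')$.

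I expect no genuine obstacle here; the argument is a routine verification. The only point needing a little care is to keep track of the left/right placement of factors when passing between the one-sided actions $\ast$ and the defining equation $n f(h) = n' f(h)$ of $E$, but commutativity of $N$ dissolves any such issue. Note that the homomorphism property of $f$ is used only in the second condition, to absorb the product $hh'$ appearing in the $H$-component.
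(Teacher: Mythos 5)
Your two computations are exactly the ones in the paper's proof: for condition (1) of \cref{actiondef} you reduce to $n_1nf(h) = n_2nf(h)$ and use commutativity of $N$, and for condition (2) you reduce to $nf(hh') = n'f(hh')$, expand $f(hh') = f(h)f(h')$ via the homomorphism property, and commute; the paper performs precisely these calculations. However, there is a scope gap in your framing. You assert that conditions (1) and (2) ``are exactly what compatible with $Q$ requires,'' i.e.\ you prove only that $\alpha$ is a compatible \emph{pre-action} in the sense of \cref{compatability1,compatability2}. The paper's proof reads the proposition as asserting that $\alpha$ is a genuine \emph{action} with respect to $Q$, i.e.\ that it satisfies all six conditions of \cref{actiondef}, and this stronger reading is the one actually needed: \cref{actionartin} is invoked immediately afterwards to equip the quotient with the multiplication $[n,h]\cdot[n',h'] = [nn',hh']$ and to exhibit $\Gl(f)$ as part of a weakly Schreier extension, which requires the full action axioms, not just pre-action compatibility.

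Fortunately the gap is trivial to fill, and the paper disposes of it in one sentence: with $\alpha(h,n) = n$, conditions (3)--(6) of \cref{actiondef} hold because substitution makes the two sides of each literally identical --- e.g.\ $[\alpha(h,nn'),h] = [nn',h] = [\alpha(h,n)\alpha(h,n'),h]$ and $[\alpha(hh',n),hh'] = [n,hh'] = [\alpha(h,\alpha(h',n)),hh']$, and similarly for the unit conditions. You should add this observation so that your argument establishes the statement in the strength in which it is subsequently used.
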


\begin{proof} 
Consider the preaction $\alpha(h,n) = n$ for all $h\in H$ and $n \in N$. Were this a compatible action on $N \times H/E$ it would yield multiplication $[n,h][n',h'] = [n \cdot \alpha(h,n'),hh'] = [nn',hh']$. Let us now show that $\alpha$ is a compatible action which entails showing that it satisfies the six condition in \cref{actiondef}. 

First suppose that $(n_1,h) \sim (n_2,h)$ and consider 
$(n_1 \cdot \alpha(h,n),h)$ and $(n_2 \cdot \alpha(h,n),h)$. In order to show that they are related in $E$ we consider the following. 
\begin{align*}
n_1 \cdot \alpha(h,n) \cdot f(h)    &= n_1n\cdot f(h) \\ 
                                    &= nn_1 \cdot f(h) \\
                                    &= nn_2\cdot f(h) \\ 
                                    &= n_2 \alpha(h,n) \cdot f(h)
\end{align*}

Similarly if we let $(n,h') \sim (n',h')$ we can consider $(\alpha(h,n),hh')$ and $(\alpha(h,n'),hh')$. We perform a similar calculation 
\begin{align*}
\alpha(h,n) \cdot f(h) \cdot f(h')  &= n \cdot f(h) \cdot f(h') \\ 
                                    &= n \cdot f(h') \cdot f(h) \\ 
                                    &= n' \cdot f(h') \cdot f(h) \\ 
                                    &= \alpha(h,n') \cdot f(h) \cdot f(h'),
\end{align*}
and see that indeed $(\alpha(h,n),hh') \sim (\alpha(h,n'),hh')$. Thus $\alpha$ is a compatible preaction.
 
The final four conditions follow immediately from the definition of $\alpha$.
\end{proof}

Notice that two monoid homomorphisms $f,g \colon H \to N$ may yield the same weakly Schreier extension. In fact when $N$ is right-cancelative, each homomorphism $f \colon H \to N$ yields the usual product $N \times H$.

\begin{example}
Let $N$ be a meet-semilattice and $f \colon H \to N$ a monoid map. Then $\splitext{N}{k}{\Gl(f)}{e}{s}{H}$ is a weakly Schreier extension, where $\Gl(f)$ is the set of pairs $(n,h)$ in which $n \leq f(h)$ with $(n,h) \cdot (n',h')=(n \wedge n', h \cdot h')$, $k(n) = (n,1)$, $e(n,h) = h$ and $s(h) = (f(h),h)$. 

To see this consider the admissible quotient given by $E$ as in \cref{likeartin}. If we assume $H$ is equipped with the discrete order, then each equivalence class $[n,h]$ has a smallest element given by $(n \wedge f(h), h)$. The set of these representatives is easily seen to be $\Gl(f)$.

\Cref{actionartin} gives that the action $\alpha(h,n) = n$ for all $n \in N$ and $h \in H$ is compatible with $N \times H/E$. This induces multiplication $[n,h]\cdot [n',h'] = [n \wedge n', h \cdot h']$. If $(n,h), (n',h') \in \Gl(f)$ then $(n \wedge n', h \cdot h') \in \Gl(f)$. This gives that $\alpha$ induced componentwise multiplication on $\Gl(f)$ and so we are done. 
\end{example}

\subsection{The coarsest admissible quotient}

Schreier extensions of $H$ by $N$ may be thought of as the weakly Schreier extensions with the finest admissible quotient on $N \times H$. As discussed above, \cite{martins2013semidirect} provides a complete characterization of all actions compatible with this discrete quotient. Dual to this problem might be considering the coarsest admissible quotient and characterizing the actions compatible with it. 

 As discussed in \cref{inverses}, if $h \in H$ has a right inverse, then $(n,h) \sim (n',h)$ implies that $n = n'$. Taking this and the fact that $(n,h) \sim (n',h')$ must imply that $h = h'$ as our only constraints, we can consider the equivalence relation generated by the condition $(n,h) \sim (n',h)$ iff $h$ has no right inverse. If admissible, this would be the coarsest admissible quotient on $N \times H$.
 
 
 Let $L(H) \subseteq H$ be the submonoid of right invertible elements --- that is, elements $h$ with right inverses and let $\overline{L(H)}$ be the the set of elements which are not right invertible.

\begin{proposition}
Let $(Q,[\alpha]) \in \mathrm{WAct}(H,N)$. Then $\alpha\hspace{-0.8mm}\mid_{L(H) \times N}$ is an action of $L(H)$ on $N$.
\end{proposition}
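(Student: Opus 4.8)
The plan is to exploit \cref{inverses}, which tells us that whenever $h \in L(H)$ the equivalence class of $(n,h)$ in $Q$ is a singleton; that is, for $h \in L(H)$ and $x,y \in N$, the equality $[x,h] = [y,h]$ forces $x = y$. This lets me strip the brackets off the relevant conditions of \cref{actiondef} whenever the pertinent second coordinate lies in $L(H)$, turning the equivalence-class equalities into honest equalities in $N$, which are exactly the axioms of a monoid action.

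First I would record that $L(H)$ is a submonoid of $H$: it contains $1$ (its own right inverse) and is closed under multiplication, since $ha = 1$ and $h'b = 1$ give $hh' \cdot ba = 1$. In particular $1 \in L(H)$, and $hh' \in L(H)$ whenever $h,h' \in L(H)$; this closure is precisely what lets the restriction $\alpha|_{L(H) \times N}$ be manipulated entirely within the singleton-class regime governed by \cref{inverses}.

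Next I would run through the four monoid-action axioms one at a time. Condition (3) of \cref{actiondef} reads $[\alpha(h,nn'),h] = [\alpha(h,n)\alpha(h,n'),h]$, so for $h \in L(H)$ it collapses to $\alpha(h,nn') = \alpha(h,n)\alpha(h,n')$; combined with condition (5), which yields $\alpha(h,1) = 1$, this shows each $\alpha(h,-)$ is a monoid endomorphism of $N$. Condition (4), $[\alpha(hh',n),hh'] = [\alpha(h,\alpha(h',n)),hh']$, has second coordinate $hh' \in L(H)$ by the closure noted above, so it collapses to $\alpha(hh',n) = \alpha(h,\alpha(h',n))$. Finally condition (6) reads $[\alpha(1,n),1] = [n,1]$; since classes with second coordinate $1$ are singletons (the first admissibility condition, equivalently \cref{samen}), this gives $\alpha(1,n) = n$. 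Together these say that $h \mapsto \alpha(h,-)$ is a monoid homomorphism $L(H) \to \mathrm{End}(N)$, i.e.\ that $\alpha|_{L(H)\times N}$ is an action of $L(H)$ on $N$.

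I expect no serious obstacle here: the whole content is the bookkeeping observation that in each relevant condition the second coordinate ($h$, $hh'$, or $1$) lies in $L(H)$, so \cref{inverses} applies and the bracketed equalities become equalities in $N$. The one place that genuinely uses the submonoid structure of $L(H)$, rather than \cref{inverses} applied pointwise, is the composition law, where closure under products is what guarantees $hh' \in L(H)$. It is also worth remarking in passing that, since \cref{inverses} forces $\alpha(h,n)$ to be determined by the class $[\alpha(h,n),h]$ for $h \in L(H)$, the restriction $\alpha|_{L(H)\times N}$ does not depend on the representative chosen for $[\alpha]$, so the statement is well posed.
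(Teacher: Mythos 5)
Your proof is correct and follows essentially the same route as the paper: apply \cref{inverses} to collapse conditions (3)--(6) of \cref{actiondef} into equalities in $N$ whenever the second coordinate lies in $L(H)$, using closure of $L(H)$ under multiplication for the composition law. Your additional remarks (the explicit verification that $hh'$ is right invertible, and the well-posedness of the restriction with respect to the representative of $[\alpha]$) are details the paper leaves implicit, but the argument is the same.
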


\begin{proof}
\Cref{inverses} gives us that for all $x\in L(H)$, $(n,x) \sim (n',x)$ implies that $n = n'$. Thus $[\alpha(x,nn'),x] = [\alpha(x,n)\alpha(x,n'),x]$ implies that $\alpha(x,nn') = \alpha(x,n)\alpha(x,n')$. 

Similarly if $x,x' \in L(H)$ we get that $[\alpha(xx',n),xx'] = [\alpha(x,\alpha(x',n)),xx']$ implies $\alpha(xx',n) = \alpha(x,\alpha(x',n))$, as $xx' \in L(H)$.

Finally it is easy to see that these same arguments give that $\alpha(1,n) = n$ and $\alpha(x,1) = 1$.
\end{proof}

This result tells us that in general there are some maps $\alpha \colon H \times N \to N$ such that no admissible quotient makes it an action.

We also get the following corollary.

\begin{corollary}
Let $\splitext{N}{k}{G}{e}{s}{H}$ be a weakly Schreier extension and $(N \times H)/E(e,s)$ the associated weak semidirect product. Then $(N \times L(H))/E(e,s) \subseteq (N \times H)/E(e,s)$ is a submonoid which is itself part of a Schreier extension of $L(H)$ by $N$ given by $(N \times L(H),[\alpha\hspace{-0.8mm}\mid_{L(H) \times N}])$.
\end{corollary}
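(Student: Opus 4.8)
The plan is to read everything off the explicit multiplication on the weak semidirect product. Recall from \cref{startweak} that on $(N \times H)/E(e,s)$ the product is $[n,h] \cdot [n',h'] = [\,n \cdot \alpha(h,n'),\, hh'\,]$, where $\alpha(h,n) = q(s(h)k(n))$. First I would verify that the subset $(N \times L(H))/E(e,s) = \{\,[n,x] : n \in N,\ x \in L(H)\,\}$ is a submonoid. That ``having right-invertible second component'' is a well-defined property of a class follows from \cref{sameh}, which says the $H$-component is an invariant of the class. The subset contains the identity $[1,1]$ because $1 \in L(H)$, and given $[n,x]$ and $[n',x']$ with $x,x' \in L(H)$, the formula above sends their product to $[\,n\cdot\alpha(x,n'),\,xx'\,]$; since $L(H)$ is closed under multiplication, $xx' \in L(H)$, so the product again lies in the subset. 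Hence it is a submonoid.

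Next I would restrict the maps of the canonical extension from \cref{canon}, namely $k(n) = [n,1]$, $e([n,x]) = x$ and $s(x) = [1,x]$, to obtain $k \colon N \to (N \times L(H))/E(e,s)$, $e \colon (N \times L(H))/E(e,s) \to L(H)$ and $s \colon L(H) \to (N \times L(H))/E(e,s)$. The verification that these form a split extension of $L(H)$ by $N$ is identical to the argument in \cref{canon}: because $1 \in L(H)$, the kernel and cokernel computations carry over verbatim. The genuinely new point is that this extension is \emph{Schreier}, not merely weakly Schreier. Every class with right-invertible second component factors as $[n,x] = [n,1]\cdot[1,x] = k(n)\cdot s(x)$, and if $[n,x] = k(n')\cdot s(x) = [n',x]$ then \cref{inverses} forces $n = n'$, giving the required uniqueness.

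Finally I would identify this Schreier extension with the pair $(N \times L(H), [\alpha\!\mid_{L(H) \times N}])$. By \cref{inverses} the restriction of $E(e,s)$ to $N \times L(H)$ is discrete, so as a set the submonoid is simply $N \times L(H)$ carrying the product $(n,x)(n',x') = (n\cdot\alpha(x,n'),\,xx')$; by the preceding proposition $\alpha\!\mid_{L(H)\times N}$ is a genuine action of $L(H)$ on $N$, so this is precisely the semidirect product $Q_\alpha$ that \cref{canon} produces for the discrete quotient on $N \times L(H)$ over the base $L(H)$. I expect the only point needing care to be the compatibility bookkeeping: confirming that the restricted preaction qualifies as an action relative to the discrete quotient on $N \times L(H)$ (so that \cref{canon} may be invoked with $H$ replaced by $L(H)$), and that the bijection $[n,x] \leftrightarrow (n,x)$ transports the inherited multiplication to the semidirect-product multiplication. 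Both reduce to the action identities for $\alpha\!\mid_{L(H)\times N}$ already established in the preceding proposition, together with the discreteness supplied by \cref{inverses}.
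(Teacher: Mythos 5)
Your proposal is correct and follows exactly the reasoning the paper leaves implicit for this corollary: the explicit multiplication from \cref{startweak} gives closure (hence a submonoid), \cref{inverses} gives discreteness of the restricted quotient and thus the uniqueness making the extension Schreier, and the preceding proposition supplies that $\alpha\!\mid_{L(H)\times N}$ is a genuine action, identifying the result with the semidirect product $(N \times L(H), [\alpha\!\mid_{L(H)\times N}])$. No gaps; this is essentially the paper's intended derivation, spelled out.
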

 
Thus in each weakly Schreier extension we understand how a particular submonoid behaves. Let us now study how the rest of the monoid behaves. 

\begin{proposition}
Let $H$ be a monoid. The subset $\overline{L(H)} \subseteq H$ is a monoid right-ideal --- that is, if $y \in \overline{L(H)}$ and $x \in H$ then $yx \in \overline{L(H)}$.	
\end{proposition}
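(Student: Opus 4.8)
The final statement asserts that $\overline{L(H)}$, the set of elements of $H$ that are \emph{not} right invertible, is a right ideal. The plan is to prove the contrapositive of the closure condition: rather than showing directly that $y \in \overline{L(H)}$ and $x \in H$ force $yx \in \overline{L(H)}$, I would show that if the product $yx$ \emph{is} right invertible, then $y$ must already be right invertible. This turns the claim into a purely algebraic observation about right inverses and avoids any case analysis on non-invertibility.

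Concretely, suppose $yx \in L(H)$, so there exists $z \in H$ with $(yx)z = 1$. The key step is simply to reassociate: $(yx)z = y(xz) = 1$, and hence $xz$ is a right inverse of $y$, giving $y \in L(H)$. By contraposition, $y \in \overline{L(H)}$ together with $x \in H$ yields $yx \in \overline{L(H)}$, which is exactly the right-ideal condition stated. Associativity of the monoid multiplication is the only property used.

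I expect no real obstacle here; the entire content is the single reassociation step, and there are no edge cases to worry about since the monoid identity and associativity are all that the argument invokes. If I wanted to foreground the structure, I would state and use the auxiliary fact that $L(H)$ is closed under left division in the weak sense that $yx \in L(H)$ implies $y \in L(H)$, but it is cleaner to inline this observation directly into the proof as above. The proof will therefore be two or three lines long.
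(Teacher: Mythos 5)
Your proposal is correct and matches the paper's own proof: the paper argues by contradiction that a right inverse $(yx)^*$ of $yx$ would make $x(yx)^*$ a right inverse of $y$, which is exactly your reassociation step phrased contrapositively. The two arguments are identical in content.
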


\begin{proof}
Supose $y \in \overline{L(H)}$ and let $x \in H$. Then if $yx(yx)^* = 1$, that would imply that $x(yx)^*$ was a right inverse for $y$, contradicting the fact that $y \in \overline{L(H)}$.
\end{proof}

\begin{corollary}
Let $\splitext{N}{k}{G}{e}{s}{H}$ be a weakly Schreier extension and $(N \times H)/E(e,s)$ the associated weak semidirect product. Then the subset of $(N \times H)/E(e,s)$ comprising the classes $[n,y]$ where $y \in \overline{L(H)}$, is a right ideal of $(N \times H)/E(e,s)$.
\end{corollary}

\begin{definition}
Let $E$ be the equivalence relation given by $(n,h) \sim (n',h)$ implies $n = n'$ whenever $h \in L(H)$ and $(n,h) \sim (n',h)$ for all $n,n' \in N$ whenever $h \in \overline{L(H)}$. We call the quotient $Q$ induced by $E$ the \emph{coarse quotient} on $N \times H$.	
\end{definition}

\begin{proposition}\label{coarseadmis}
The coarse quotient $Q$ on $N \times H$ is admissible.
\end{proposition}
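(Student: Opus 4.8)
The plan is to verify directly that the coarse quotient satisfies all four conditions in the definition of an admissible equivalence relation. The relation $E$ is defined piecewise according to whether $h \in L(H)$ or $h \in \overline{L(H)}$, so the natural strategy is to check each admissibility condition by splitting into these two cases. Since the two parts of $H$ are disjoint and the relation never relates pairs with differing second coordinates, conditions (1) and (2) should fall out almost immediately from the definition.

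First I would dispatch condition (2): the relation only ever relates $(n,h) \sim (n',h)$ with a common $h$, so $(n_1,h_1) \sim (n_2,h_2)$ forces $h_1 = h_2$ by construction. For condition (1), I would note that $1 \in L(H)$ (it is its own right inverse), so the $L(H)$ clause of the definition applies and $(n_1,1) \sim (n_2,1)$ directly gives $n_1 = n_2$.

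The substantive work is in conditions (3) and (4), which concern closure under left multiplication by $n \in N$ and right multiplication by $h' \in H$. For condition (3), suppose $(n_1,h) \sim (n_2,h)$; I would split on whether $h \in L(H)$ or $h \in \overline{L(H)}$. If $h \in L(H)$, then by definition $n_1 = n_2$, whence trivially $nn_1 = nn_2$ and so $(nn_1,h) \sim (nn_2,h)$. If $h \in \overline{L(H)}$, then by definition all pairs with second coordinate $h$ are related, so in particular $(nn_1,h) \sim (nn_2,h)$ regardless. For condition (4), the analogous case split applies, but here the key point — and the one requiring the right-ideal lemma proved just above — is the behaviour of the second coordinate $hh'$. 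If $h \in L(H)$ then $n_1 = n_2$ and $(n_1,hh') = (n_2,hh')$ is a trivial identity, so this is fine. If $h \in \overline{L(H)}$, then I must know that $hh' \in \overline{L(H)}$ as well, so that the ``coarse'' clause applies to $hh'$ and relates $(n_1,hh')$ with $(n_2,hh')$ automatically; this is precisely the statement that $\overline{L(H)}$ is a right ideal, established in the preceding proposition.

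The main obstacle, such as it is, is making sure the case analysis in condition (4) is airtight: one must genuinely invoke the right-ideal property to guarantee $h \in \overline{L(H)}$ implies $hh' \in \overline{L(H)}$, rather than assuming the coarse clause applies to $hh'$ without justification. Everything else is bookkeeping with the piecewise definition. I would therefore structure the proof as four short verifications, flagging the use of the right-ideal lemma explicitly in the case $h \in \overline{L(H)}$ of condition (4).
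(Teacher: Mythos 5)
Your proof is correct and follows exactly the paper's own argument: conditions (1) and (2) are immediate from $1 \in L(H)$ and the definition of the relation, and conditions (3) and (4) are handled by the same case split on $h \in L(H)$ versus $h \in \overline{L(H)}$, invoking the right-ideal property of $\overline{L(H)}$ precisely where the paper does. No gaps; nothing to change.
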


\begin{proof}

Since $1 \in L(H)$ we immediately get that $(n,1) \sim (n',1)$ implies that $n=n'$. It is also not hard to see that $(n,h) \sim (n',h')$ implies that $h=h'$ irrespective of where $h$ and $h'$ come from.

Now suppose $(n_1,h) \sim (n_2,h)$. If $h \in L(H)$ then $n_1=n_2$ and so for all $n \in N$ and $h' \in H$ we have $(nn_1,h) = (nn_2,h)$ and $(n_1,hh') = (n_2,hh')$.  

If instead $h \in \overline{L(H)}$ it is immediate that for all $n \in N$, $(nn_1,h) \sim (nn_2,h)$. For $h' \in H$ we have that $hh' \in \overline{L(H)}$ as $\overline{L(H)}$ is a right ideal and so $(n_1,hh') \sim (n_2,hh')$.
\end{proof}

\begin{proposition}
Suppose that $\overline{L(H)}$ is a two-sided ideal. Each map $\alpha \colon H \times N \to N$ in which $\alpha\hspace{-0.8mm}\mid_{L(H) \times N}$ is an action of $L(H)$ on $N$, is compatible with the coarse quotient $Q$. If $\overline{L(H)}$ is not a two-sided ideal and $N \ne \{1\}$ then no function $\alpha \colon H \times N \to N$ is compatible with $Q$.
\end{proposition}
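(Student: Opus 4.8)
The plan is to treat the two assertions separately, checking in each case the six conditions of \cref{actiondef} against the explicit description of the coarse quotient. The structural fact I will lean on throughout is that the fibre of the coarse quotient over $h$ is \emph{discrete} (one element per class) exactly when $h \in L(H)$, and is \emph{totally collapsed} (a single class) exactly when $h \in \overline{L(H)}$. Together with \cref{inverses}, this reduces almost every condition to a short case split on whether the governing element of $H$ lies in $L(H)$ or in $\overline{L(H)}$: over $\overline{L(H)}$ both sides of the required class-equality live in one collapsed fibre and so coincide trivially, while over $L(H)$ the fibre is discrete and the class-equality becomes an equality of elements.

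For the positive direction, assume $\overline{L(H)}$ is a two-sided ideal and that $\alpha|_{L(H)\times N}$ is an action of $L(H)$ on $N$. Condition (1) is immediate in both cases (a discrete fibre forces $n_1=n_2$, a collapsed fibre makes the classes coincide). Conditions (3), (5), (6) reduce, over $L(H)$, to the action identities $\alpha(h,nn')=\alpha(h,n)\alpha(h,n')$, $\alpha(h,1)=1$, $\alpha(1,n)=n$, and are automatic over $\overline{L(H)}$. The ideal hypothesis is used only in (2) and (4). In (2), when $h'\in\overline{L(H)}$ I need $hh'\in\overline{L(H)}$ so that the target fibre collapses, which is exactly the left-ideal half of being two-sided. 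In (4), when $hh'\in L(H)$ I must conclude $h,h'\in L(H)$ so that the $L(H)$-action composition law $\alpha(hh',n)=\alpha(h,\alpha(h',n))$ applies; this follows from the contrapositive $\big(h\in\overline{L(H)}\text{ or }h'\in\overline{L(H)}\big)\Rightarrow hh'\in\overline{L(H)}$, which is precisely the two-sided property.

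For the negative direction, suppose $\overline{L(H)}$ is not two-sided. Since $\overline{L(H)}$ is always a right ideal (shown above), this failure is a failure of the left-ideal property, so I can choose $h\in H$ and $h'\in\overline{L(H)}$ with $hh'\in L(H)$. Note that $hh'\in L(H)$ already forces $h\in L(H)$: a right inverse $z$ of $hh'$ gives $h(h'z)=1$. Now assume for contradiction that some $\alpha\colon H\times N\to N$ is compatible with $Q$. Applying condition (2) of \cref{actiondef} with this pair, and using that $h'\in\overline{L(H)}$ collapses its fibre while $hh'\in L(H)$ keeps the target fibre discrete, forces $\alpha(h,n)=\alpha(h,n')$ for all $n,n'$; that is, $\alpha(h,-)$ is constant. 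Reading condition (5) in the discrete fibre over $h\in L(H)$ pins the constant to $\alpha(h,1)=1$, so $\alpha(h,-)\equiv 1$.

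To close the contradiction I pick a right inverse $w$ of $h$ (so $hw=1$) and apply condition (4) with the pair $(h,w)$: since $hw=1\in L(H)$ the fibre over it is discrete, giving $\alpha(1,n)=\alpha(h,\alpha(w,n))$. Condition (6) evaluates the left-hand side to $n$, while $\alpha(h,-)\equiv 1$ evaluates the right-hand side to $1$; hence $n=1$ for every $n\in N$, forcing $N=\{1\}$ and contradicting $N\neq\{1\}$. I expect the only real subtlety to be exactly this bookkeeping of which fibres are discrete versus collapsed, and in particular the two implications $hh'\in L(H)\Rightarrow h\in L(H)$ (always true) and $hh'\in L(H)\Rightarrow h,h'\in L(H)$ (needing the two-sided hypothesis), rather than any serious computation: once the fibre type is identified, each of the six conditions collapses to either a triviality or a known action identity.
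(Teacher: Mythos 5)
Your proof is correct and follows essentially the same route as the paper's: the same case split on discrete fibres over $L(H)$ versus collapsed fibres over $\overline{L(H)}$, the same isolation of the two-sided hypothesis to conditions (2) and (4), and the same contradiction in the negative direction (condition (2) forces $\alpha(h,-)\equiv 1$, then conditions (4) and (6) applied with a right inverse of $h$ force $N=\{1\}$). The only addition is your explicit justification that $hh'\in L(H)$ forces $h\in L(H)$, a point the paper asserts without proof.
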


\begin{proof}
Suppose $\overline{L(H)}$ is an ideal and suppose $\alpha\hspace{-0.8mm}\mid_{L(H) \times N}$ is an action of $L(H)$ on $N$.

The first compatibility condition in \cref{actiondef} says that whenever $(n_1,h) \sim (n_2,h)$ that $(n_1\alpha(h,n),h) \sim (n_2\alpha(h,n),h)$ for all $n\in N$. If $h \in L(H)$ then $n_1 = n_2$ and so this holds. If $h \in \overline{L(H)}$ then all elements are related and so this also holds.

The second compatibility condition requires that if $(n_1,h') \sim (n_2,h')$ then $(\alpha(h,n_1),hh') \sim (\alpha(h,n_2),hh')$ for all $h \in H$. Now again if $h' \in L(H)$, $n_1 = n_2$ and we have that the condition is satisfied. If $h' \in \overline{L(H)}$ then $hh' \in \overline{L(H)}$ as $\overline{L(H)}$ is a two-sided ideal. Thus $(\alpha(h,n_1),hh') \sim (\alpha(h,n_2),hh')$, as all such elements are related.

We know that $\alpha\hspace{-0.8mm}\mid_{L(H) \times N}$ is an action of $L(H)$ on $N$ and so we have that for all $x\in L(H)$ and $n,n' \in N$:

\begin{enumerate}
\item $[\alpha(x,nn'),x] = [\alpha(x,n)\alpha(x,n'),x]$,
\item $[\alpha(1,n),1] = [n,1]$,
\item $[\alpha(x,1),x] = [1,x]$.	
\end{enumerate}

Since $Q$ is the coarse quotient we have that for all $y \in L(H)$ and $n,n' \in N$:

\begin{enumerate}
\item $[\alpha(y,nn'),y] = [\alpha(y,n)\alpha(y,n'),y]$,
\item $[\alpha(y,1),y] = [1,y]$.	
\end{enumerate}

Since $L(H)$ and $\overline{L(H)}$ are complements this means that the only condition remaining is to check that for all $h,h' \in H$ and $n \in N$ we have that $[\alpha(hh',n),hh'] = [\alpha(h,\alpha(h',n)),hh']$. Now if either $h \in \overline{L(H)}$ or $h' \in \overline{L(H)}$ we will have $hh' \in \overline{L(H)}$ which will immediately give equality. If neither $h$ nor $h'$ are elements of $\overline{L(H)}$ then they both belong to $L(H)$ and so using the fact that $\alpha\hspace{-0.8mm}\mid_{L(H) \times N}$ is an action of $L(H)$ on $N$ we have that $[\alpha(hh',n),hh'] = [\alpha(h,\alpha(h',n)),hh']$. Thus $\alpha$ is compatible with the coarse quotient $Q$.

If $N \ne \{1\}$ and $\overline{L(H)}$ is not an ideal then there exist elements $y \in \overline{L(H)}$ and $x \in L(H)$ such that $xy \in L(H)$. Now given some function $\alpha \colon H \times N \to N$, for the second compatibility condition to hold we need in particular that $(n_1,y) \sim (n_2,y)$ implies that $(\alpha(x,n_1),xy) \sim (\alpha(x,n_2),xy)$ which must then imply that $\alpha(x,n_1) = \alpha(x,n_2)$ for all $n_1, n_2 \in N$. But we also know that $\alpha(x,1) = 1$ and so $\alpha(x,n) = 1$ for all $n$. Now let $x^*$ be a left inverse of $x$. We know that for all $n \in N$ we have
\begin{align*}
[n,1] 		&= [\alpha(1,n),1] \\						&= [\alpha(xx^*,n),xx^*] \\ 				&= [\alpha(x,\alpha(x^*,n)),xx^*] \\ 
			&= [1,1].	
\end{align*}

This is a contradiction and so $\alpha$ cannot be compatible with $Q$.
\end{proof}

Whenever $H$ is finite, commutative, a group or has no nontrivial right invertible elements at all, then this right ideal $\overline{L(H)}$ will be a two-sided ideal. Similarly if $H$ is a monoid of $n \times n$ matrices over a field, then this same property holds. 

\begin{example}
Let $H$ and $N$ both be the monoid of $n \times n$ matrices with entries from some field $K$. Matrices with right inverses always have two-sided inverses and so have non-zero determinant. Thus the coarsest quotient $Q$ on $N \times H$ gives that when $\mathrm{det}(B) = 0$ we have that $(A,B) \sim (A',B)$ for all $A,A' \in N$ and when $\mathrm{det}(B) \ne 0$ we have $(A,B) \sim (A',B)$ if and only if $A = A'$. 

Observe that the submonoid $L(H)$ is actually the group $\mathrm{GL}(n,K)$. Thus we can consider the map $\alpha \colon H \times N \to N$ where $\alpha(B,A) = BAB^{-1}$ whenever $\mathrm{det}(B) \ne 0$ and $\alpha(B,A) = A$ otherwise. This is clearly an action of $\mathrm{GL}(n,K)$ on $N$.

Note that the right ideal $\overline{L(H)}$ is a two-sided ideal due to the multiplicative nature of the determinant. Thus this map $\alpha$ is compatible with the coarsest quotient $Q$.
\end{example}

This provides our first example of a weakly Schreier extension with a non-trivial quotient and non-trivial action.

\begin{example}
Suppose that besides the identity $H$ has no elements with right inverses. Then the disjoint union $N \sqcup (H - \{1\})$ can be equipped with the following multiplication and made a monoid. 
Let $n,n' \in N$ and $h,h' \in H - \{1\}$. Then

\begin{enumerate}
    \item $n \cdot n' = n \cdot_N n'$,
    \item $h \cdot h' = h \cdot_H h'$,
    \item $n \cdot h = h = h \cdot n$.
\end{enumerate}

The extension $\splitext{N}{k}{N \sqcup (H -\{1\})}{e}{s}{H}$ is weakly Schreier as it is isomorphic to the weakly Schreier extension given by $(Q,[\alpha])$ where $Q$ is the coarse quotient on $N \times H$ and $\alpha(h,n) = n$. The isomorphism is given by sending $n$ to $[n,1]$ and $h$ to $[1,h]$.
\end{example}

Crucial in the proof of these last few results is that we have partitioned the elements of $H$ into a submonoid and an ideal. Ideals whose complements are submonoids are called prime. We can generalise the above results for any prime ideal $Y$. Replace the coarse quotient with a new quotient $Q$ in which for $y \in Y$ we have that $(n,y) \sim (n',y)$ for all $n, n' \in N$ and for $x \in H-Y$ we have that $(n,x) \sim (n',x)$ implies that $n = n'$. Crucially a non-trivial prime ideal can never contain a right invertible element, for if it did it would contain the identity. Thus $Q$ is seen to be admissible and the above results can be seen to equally apply to this construction.

\subsection*{Acknowledgements}

I thank Graham Manuell for the many discussions had on this topic (all while he was writing up his PhD).

\bibliographystyle{abbrv}
\bibliography{bibliography}
\end{document}